\theoremstyle{plain}
\newtheorem{thm}{Theorem}
\newtheorem{lem}[thm]{Lemma}
\theoremstyle{definition}
\theoremstyle{remark}
\newtheorem*{rem*}{Remark}
\newtheorem{rem}{Remark}
\newcommand{\Rd}{{\mathbb{R}^d}}
\newcommand{\N}{{\mathbb{N}}}
\newcommand{\R}{\mathbb{R}}
\newcommand{\EEE}{\mathcal{E}}
\newcommand{\AAA}{\mathcal{A}}
\newcommand{\al}{\alpha}
\def \tp{\tilde{p}}
\newcommand{\indyk}{\mathbf{1}}
\date{\today}
\author[K.~Bogdan]{Krzysztof Bogdan}
\address{Faculty of Pure and Applied Mathematics, Wroc\l{}aw University of Science and Technology, Wyb. Wyspia\'nskiego 27, 50-370 Wroc\l{}aw, Poland.}
\email{krzysztof.bogdan@pwr.edu.pl}
\author[T.~Jakubowski]{Tomasz Jakubowski}
\address{Faculty of Pure and Applied Mathematics, Wroc\l{}aw University of Science and Technology, Wyb. Wyspia\'nskiego 27, 50-370 Wroc\l{}aw, Poland.}
\email{tomasz.jakubowski@pwr.edu.pl}
\author[J.~Lenczewska]{Julia Lenczewska}
\address{Faculty of Pure and Applied Mathematics, Wroc\l{}aw University of Science and Technology, Wyb. Wyspia\'nskiego 27, 50-370 Wroc\l{}aw, Poland.}
\email{julia.lenczewska@pwr.edu.pl}
\author[K.~Pietruska-Pa\l uba]{Katarzyna Pietruska-Pa\l uba}
\address{Institute of Mathematics, University of Warsaw, ul. Banacha 2, 02-097 Warsaw, Poland.}
\email{kpp@mimuw.edu.pl}
\thanks{The first named author was partially supported by the NCN grant 2017/27/B/ST1/01339.  The second and third named authors were partially supported by the NCN grant
2015/18/E/ST1/00239. The fourth named author was partially supported by the NCN grant  2018/31/B/ST1/03818.}
\subjclass[2010]{Primary 46E35; Secondary 31C05}
\keywords{Hardy inequality, fractional Laplacian, Markovian semigroup}
\begin{document}
\title[Hardy inequalities]{Optimal Hardy inequality for the fractional Laplacian on $L^p$}

\begin{abstract}
For the fractional Laplacian we give Hardy inequality which is optimal in $L^p$ for $1<p<\infty$. As an application, we explicitly characterize the contractivity of the corresponding Feynman-Kac semigroups on $L^p$.
\end{abstract}
\maketitle

\section{Introduction}
\label{sec:int}
Hardy inequalities
 are of paramount importance in harmonic analysis, functional analysis, partial differential equations, potential theory and probability.
They are applied to
embedding theorems, Gagliardo--Nirenberg interpolation inequalities and in real interpolation theory, see Chua \cite{MR2138501}, 
 Ka{\l}amajska and Pietruska-Pa{\l}uba \cite{MR2852869}.
They 
yield contractivity of operator semigroups,
a priori estimates, existence and regularity results  for solutions of PDEs, 
plus their asymptotics 
and qualitative properties, see, e.g.,  Maz'ya \cite{MR2777530}, Arendt, Goldstein and Goldstein \cite{MR2259099},  Barras and Goldstein \cite{MR742415}, and Vazquez and Zuazua \cite{MR1760280}.
The important connection betweeen Hardy-type inequalities and superharmonic functions was exploited, e.g., by Ancona \cite{MR856511}, Fitzsimmons \cite{MR1786080}, Bogdan and Dyda \cite{MR2663757}, Dyda \cite{Dyda12}, 
Devyver, Fraas and Pinchover \cite{MR3170212},
Bogdan, Dyda and Kim \cite{MR3460023}, Bogdan, Jakubowski, Grzywny and Pilarczyk \cite{MR3933622}.
In particular, the
inequalities are connected to sharp estimates of the heat kernel of $\Delta^{\alpha/2} + \kappa|x|^{-\alpha}$  \cite{MR3933622}, see also Calvaruso, Metafune, Negro and Spina \cite{MR4043021}.

For an account of
the history of Hardy-type inequalities we refer to Opic and Kufner \cite{MR1069756}.
The subject was initiated in 1920, when Hardy \cite{MR1544414} discovered that
\begin{equation}\label{e:hardycls}
\int_{0}^{\infty} \left[u'(x)\right]^2 dx \geq \frac{1}{4} \int_0^{\infty} \frac{u(x)^2}{x^2} dx,\end{equation}
for absolutely continuous functions $u$ such that $u(0)=0$ and $u' \in L^2(0,\infty)$.
The classical Hardy inequality in $\Rd$ for $d \geq 2$
is
\begin{equation}\label{e:hardycl}
\int_{\Rd} |\nabla u(x)|^2 \ge \frac{(d-2)^2}{4}\int_{\Rd} \frac{u(x)^2}{|x|^2} dx, \quad u\in L^2(\Rd).
\end{equation}
Here the left-hand side of \eqref{e:hardycl} is considered infinite if the distributional gradient of $u$ is not a square-integrable function,  see, e.g., \cite[(30) and (32)]{MR3460023} for this formulation.

In 2000 Fitzsimmons \cite{MR1786080} 
proved an abstract analogue of \eqref{e:hardycl}, in which the Dirichlet integral appearing on the left-hand side of \eqref{e:hardycl} is replaced
by a general symmetric Dirichlet form $\EEE$ in the sense of  Fukushima, Oshima, Takeda \cite{MR2778606}.
The rule stemming from \cite{MR1786080} is the following: If $\mathcal{L}$ is the generator of the
form
and function $h$ is superharmonic, i.e., $h\ge 0$ and $\mathcal{L}h\le 0$, then
$\EEE(u,u)\ge -\int u^2 \mathcal{L}h/h$.

The paper \cite{MR3460023} gives similar results in the setting of symmetric transition densities, with
explicit construction of the
function $h$, Riesz' charge
$-\mathcal{L}h$,
and  the Hardy weight, or Fitzsimmons' ratio,
$-\mathcal{L}h/h$.
The resulting
Hardy inequalities, in fact, Hardy identities, in some cases are optimal in the sense of large weight and large functional domain, desirably the whole of $L^2$.

The present work extends part of the results of \cite{MR3460023} to the setting of $L^p$ spaces with arbitrary $p\in (1,\infty)$.
We focus on integral forms
related to the semigroup of fractional Laplacian and give optimal inequalities in this case. We also show that the inequalities lead to optimal contractivity results for related operator semigroups on $L^p$.
From our presentation it should also be evident that the approach applies to more general sub-Markovian semigroups.

\subsection{Sobolev-Bregman forms}
Let  $d \in \N$ and $0<\al<2$.
We consider the fractional Laplacian,
$$
\Delta^{\alpha/2}u(x) := -(-\Delta)^{\alpha/2}u(x):=\underset{\epsilon \to 0+}{\lim} \int_{|y-x|>\epsilon} \left(u(y)-u(x)\right) \nu(x,y) \,dy,\quad x\in \mathbb R^d.
$$
Here, say, $u\in C_c^{2}(\mathbb R^d)$,
\[
\nu(x,y)=\AAA_{d,-\al}|y-x|^{-d-\al}, \quad \quad x, y \in \Rd,
\]
and $\AAA_{d,-\al}=
2^{\al}\Gamma\big((d+\al)/2\big)\pi^{-d/2}/|\Gamma(-\al/2)|$.
We further consider
\begin{equation}\label{e:def-quad-form}
\EEE
{[u]}:= \EEE(u,u):= \, \frac{1}{2} \int_{\Rd} \int_{\Rd} (u(x)-u(y))^2 \nu(x,y) \,dy \,dx,
\end{equation}
 defined for every (Borel measurable) $u: \Rd \to \R$.
The natural domain $\mathcal D(\mathcal E)$ of the form
consists of those functions $u \in L^2(\Rd)$
 for which $\EEE[u]<\infty$.
By \cite[Proposition 5]{MR3460023},
for all $0<\alpha<d \wedge 2$, $0 \leq \beta \leq d-\alpha$,
and
$u \in L^2(\Rd)$, we have the Hardy-type identity
\begin{align}\label{e:p2}
&\mathcal{E}{[u]}=
\kappa_{\beta}
\int_{\R^d} \frac{u(x)^2}{|x|^\alpha}\,dx
 + \frac{1}{2} \int_{\R^d}\!\int_{\R^d}
\left[\frac{u(x)}{h(x)}-\frac{u(y)}{h(y)}\right]^2
h(x)h(y) \nu(x,y)
\,dy\,dx,
\end{align}
where $h(x):=|x|^{-\beta}$, and
\begin{equation}\label{e.dkb}
\kappa_{\beta}=\frac{2^\alpha\Gamma\big(\frac{\beta+\alpha}{2}\big) \Gamma\big(\frac{d-\beta}{2}\big)}{\Gamma\big(\frac{\beta}{2}\big)\Gamma\big(\frac{d-\beta-\alpha}{2}\big)}.
\end{equation}
Before \cite{MR3460023}, the identity \eqref{e:p2} was given by Frank, Lieb and Seiringer in \cite[(4.3)]{MR2425175} for functions $u\in C^\infty_0(\Rd\setminus\{0\})$ and $\beta \in [0, (d-\alpha)/2]$. We note that the
 coefficient $\kappa_\beta$
is increasing on $[0,(d-\alpha)/2]$, symmetric with respect to $(d-\alpha)/2$ and takes on the maximal value at
$\beta=(d-\alpha)/2$, which is
$$
\kappa_{(d-\alpha)/2}=2^\alpha\Gamma\left(\frac{d+\alpha}{4}\right)^2
\Gamma\left(\frac{d-\alpha}{4}\right)^{-2},
$$
see \cite[Lemma 3.2]{MR2425175} or \cite[p.~237]{MR3460023}.
Correspondingly, the following optimal fractional Hardy inequality holds
\begin{equation}\label{e:hardy-quad}
 \mathcal{E}{[u]}\geq
\kappa_{(d-\alpha)/2}
\int_{\R^d} \frac{u(x)^2}{|x|^\alpha}\,dx\quad \text{ for all } \quad  u \in L^2(\Rd).
 \end{equation}
The inequality is also known as Hardy-Rellich inequality and  was proved by Herbst \cite[(2.6)]{MR0436854}, Beckner \cite[Theorem 2]{MR1254832} and Yafaev \cite{MR1717839}.
We will propose an optimal analogue of the inequality appropriate for $L^p(\Rd)$. To this end
for  $p \in (1, \infty)$ and (Borel measurable) $u: \Rd \to \R$ we define the Sobolev-Bregman form, or the $p$-form,
\begin{equation}\label{e:dEp}
\EEE_p[u] :=
\frac{1}{2}
\int_{\Rd} \int_{\Rd} (u(x)-u(y)) (u(x)^{\langle p - 1 \rangle}  -u(y)^{\langle p - 1 \rangle} ) \nu(x,y) \,dy \,dx.
\end{equation}
Here and below we use the notation $$a^{\langle k \rangle} := \left|a \right|^k \operatorname{sgn} a,\quad a, k \in \R,$$
where $0^k=0$.
The $p$-form
is well defined since  the integrand in \eqref{e:dEp} is nonnegative for every $u$.
In fact, to compare $\EEE_p$ with $\EEE=\EEE_2$, we recall that
\begin{equation}\label{e.cHk}
4(p-1)p^{-2}(b^{\langle p/2\rangle}-a^{\langle p/2\rangle})^2\leq (b-a)(b^{\langle p-1\rangle}-a^{\langle p-1\rangle})\leq 2(b^{\langle p/2\rangle}-a^{\langle p/2\rangle})^2.
\end{equation}
The inequality
holds true for all $p\in (1,\infty)$ and $a,b\in \R$, see, e.g., Liskevich, Perelmuter and Semenov \cite[Lemma 2.1]{MR1407327}.
By \eqref{e:hardy-quad}
and \eqref{e.cHk},
\begin{equation}\label{e:subgoal}
\EEE_p[u] \geq 4(p-1)p^{-2} \kappa_{(d-\alpha)/2}
\int_{\Rd} \frac{\left| u(x) \right|^p} {|x|^{\alpha}} \,dx, \quad u \in L^p(\Rd).
\end{equation}
We remark that the inequality \eqref{e:subgoal} is given in Cialdea and Maz'ya \cite[p. 231]{MR3235527}.
Our goal is to improve the constant.
To this end, inspired by Bogdan, Dyda and Luks \cite[(9)]{MR3251822},
we consider the \textit{Bregman divergence}:
\begin{equation}
F_p(a,b) := |b|^p - |a|^p - pa^{\langle p - 1 \rangle} (b-a), \quad a, b \in \R.
\end{equation}
It is the second-order Taylor remainder of the convex function $\R\ni x\mapsto |x|^p$, so $F_p(a,b) \geq 0$. For instance, $F_2(a,b)= (b-a)^2$.
The function $F_p$  may be used to
quantify the regularity of functions in integral forms generalizing
\eqref{e:def-quad-form}. Indeed,
the \textit{symmetrization} of $F_p$ is
\begin{equation}\label{e:fp}
\frac12 (F_p(a,b) + F_p(b,a)) =\frac{p}{2} (b-a)(b^{\langle p - 1 \rangle} -a^{\langle p - 1 \rangle} ),
\end{equation}
which is the expression in the definition  of $\mathcal E_p$, up to the factor of $p$.
In passing we refer the reader to Bogdan, Grzywny, Pietruska-Pa{\l}uba and Rutkowski \cite{2020arXiv200601932B} for references to applications of Bregman divergence in analysis, statistical learning and optimization, and to Bogdan and Wi\c{e}cek \cite{bogdan2021Burkholder} for a martingale connection.

\subsection{Main results}
For $\beta\in \R$ we denote $h_\beta(x) := \left|x \right|^{-\beta}$, $x\in \Rd$.
Of course, for $a\in \R$,
\begin{equation}\label{e:hbeta}
h_{\beta}(x)^a = h_{a\beta}(x), \quad x\in \Rd.
\end{equation}
We propose the following Hardy-type identity for the Sobolev-Bregman forms.
\begin{thm}\label{thm:Theorem1}
If $0<\alpha<d \wedge 2$, $0 \leq \beta \leq (d-\alpha) \wedge (d-\alpha)/(p-1)$, $h=h_\beta$ and   $u\in L^p(\mathbb R^d)$, then
\begin{align}\label{e:main}
\EEE_p{[u]} &= \frac{\kappa_{(p-1)\beta}+(p-1)\kappa_{\beta}}{p} \int_{\mathbb{R}^d} \frac{\left|u(x)\right|^p}{\left|x\right|^{\alpha}} dx \\[2mm]
&\quad  +\frac{1}{p}\int_{\Rd} \int_{\Rd}  F_p\left(\frac{u(x)}{h(x)}, \frac{u(y)}{h(y)}\right) h(x)^{p-1}h(y)  \nu(x,y) \,dy \,dx. \nonumber
\end{align}
\end{thm}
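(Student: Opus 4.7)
The plan is to reduce the identity to a purely algebraic calculation that is closed by the familiar formula for the fractional Laplacian of a power function.

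First I would symmetrize the definition of $\EEE_p$. Writing out $(u(x)-u(y))(u(x)^{\langle p-1\rangle} - u(y)^{\langle p-1\rangle})$ and comparing with \eqref{e:fp}, we have
\[
(b-a)\bigl(b^{\langle p-1\rangle}-a^{\langle p-1\rangle}\bigr)=\tfrac{1}{p}\bigl[F_p(a,b)+F_p(b,a)\bigr],
\]
so the symmetry $\nu(x,y)=\nu(y,x)$ gives
\[
\EEE_p[u]=\frac{1}{p}\int_{\Rd}\!\int_{\Rd} F_p\bigl(u(x),u(y)\bigr)\,\nu(x,y)\,dy\,dx.
\]
Thus it suffices to show that $F_p(u(x),u(y))$ differs from $h(x)^{p-1}h(y)\,F_p(u(x)/h(x),u(y)/h(y))$ by a remainder whose integral against $\nu$ reproduces the Hardy weight with the stated constant.

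Next I would perform the algebraic expansion. Using $(u(x)/h(x))^{\langle p-1\rangle}=u(x)^{\langle p-1\rangle}h(x)^{1-p}$ and $aa^{\langle p-1\rangle}=|a|^p$, a direct computation yields
\[
F_p(u(x),u(y))-h(x)^{p-1}h(y)\,F_p\!\Bigl(\tfrac{u(x)}{h(x)},\tfrac{u(y)}{h(y)}\Bigr)
=|u(y)|^p\Bigl[1-\bigl(\tfrac{h(x)}{h(y)}\bigr)^{p-1}\Bigr]
+(p-1)|u(x)|^p\Bigl[1-\tfrac{h(y)}{h(x)}\Bigr].
\]
The cross terms involving $u(x)^{\langle p-1\rangle}u(y)$ cancel exactly, which is the key algebraic miracle.

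Now I would integrate both sides against $\nu(x,y)\,dy\,dx$. In the first piece I interchange $x\leftrightarrow y$ using symmetry of $\nu$, so both remainders become integrated in $y$ at fixed $x$, multiplied by $|u(x)|^p$. The identity \eqref{e:hbeta} gives $(h(x)/h(y))^{p-1} = h_{(p-1)\beta}(y)/h_{(p-1)\beta}(x)$. The crucial input is the classical pointwise formula
\[
\int_{\Rd}\Bigl[1-\frac{h_\gamma(y)}{h_\gamma(x)}\Bigr]\nu(x,y)\,dy=\kappa_\gamma\,|x|^{-\alpha},\qquad 0\le\gamma\le d-\alpha,
\]
which is just $-\Delta^{\alpha/2}h_\gamma/h_\gamma=\kappa_\gamma|x|^{-\alpha}$ (the computation underlying \eqref{e:p2} from \cite{MR3460023}). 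The hypotheses $\beta\le d-\alpha$ and $(p-1)\beta\le d-\alpha$ are precisely what make this formula applicable to $\gamma=\beta$ and $\gamma=(p-1)\beta$, and they produce the coefficient $\kappa_{(p-1)\beta}+(p-1)\kappa_\beta$. Dividing by $p$ yields \eqref{e:main}.

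The main technical obstacle is legitimizing the manipulations for a general $u\in L^p(\Rd)$: each individual piece in the algebraic identity (for instance $\int\!\int u(x)^{\langle p-1\rangle}u(y)\,\nu(x,y)\,dydx$) is not absolutely convergent near the diagonal, so Fubini cannot be applied termwise. I would address this by first restricting the inner integral to $\{|y-x|>\varepsilon\}$, where every quantity is integrable and the cancellation of cross terms is literal, and then taking $\varepsilon\downarrow 0$. The limit on the left reconstitutes $\EEE_p[u]$ by monotone convergence (the integrand in \eqref{e:dEp} is nonnegative), while on the right the fractional-Laplacian formula is exactly a principal value limit and Fatou/monotone convergence handles the remaining nonnegative $F_p$-term. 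A standard approximation of $u$ by truncations in $L^p$, together with \eqref{e:subgoal} to control the Hardy integral, reduces to the case $u$ bounded and compactly supported away from the origin, where all integrals are finite and the formal calculation is rigorous.
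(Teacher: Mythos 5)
Your algebraic core is correct and it follows a genuinely different route from the paper's. The cancellation of the cross terms in
\[
F_p\bigl(u(x),u(y)\bigr)-h(x)^{p-1}h(y)\,F_p\Bigl(\tfrac{u(x)}{h(x)},\tfrac{u(y)}{h(y)}\Bigr)
=|u(y)|^p\Bigl[1-\bigl(\tfrac{h(x)}{h(y)}\bigr)^{p-1}\Bigr]+(p-1)\,|u(x)|^p\Bigl[1-\tfrac{h(y)}{h(x)}\Bigr]
\]
does hold, and combined with $\Delta^{\alpha/2}h_\gamma=-\kappa_\gamma|x|^{-\alpha}h_\gamma$ for $0\le\gamma\le d-\alpha$ (applied to $\gamma=\beta$ and $\gamma=(p-1)\beta$, which is exactly what the hypothesis on $\beta$ permits) it produces the constant in \eqref{e:main}. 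The paper instead avoids principal values entirely: it writes $\EEE_p[u]=\lim_{t\to0}\EEE^{(t)}(u,u^{\langle p-1\rangle})$ as in \eqref{e:wpf}, splits into $I^{(1)}_t, I^{(2)}_t, J_t$, and computes the limits via the subordination representation of $h_\beta$ and \eqref{e:qbeta}, using \eqref{e.mp}--\eqref{e.zp}; your truncation $\{|y-x|>\varepsilon\}$ plays the role the heat kernel $p_t$ plays there.

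The genuine gap is your final step, extending from nice $u$ to all $u\in L^p(\Rd)$ by ``a standard approximation by truncations''. The identity has three terms, and along a truncation $u_n\to u$ only the Hardy integral converges for free; neither $\EEE_p[u_n]\to\EEE_p[u]$ nor convergence of the $F_p$-weighted double integral is standard. Fatou gives inequalities in the unhelpful direction, and sharp spatial cutoffs are not normal contractions: for $x$ inside and $y$ outside the cutoff set one has $F_p(u(x),0)=(p-1)|u(x)|^p$, so $\EEE_p[u\indyk_A]\ge c\int_A|u(x)|^p\,\mathrm{dist}(x,A^c)^{-\alpha}dx$, which for $\alpha\ge1$ can be infinite even when $\EEE_p[u]<\infty$ (take $u$ smooth, equal to $1$ near $\partial A$); compare how much care Lemma~\ref{l.tm} of the paper needs just to preserve one strict inequality under approximation. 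The repair is to drop the approximation altogether: run your $\varepsilon$-argument directly for every $u\in L^p$ with $\int|u|^p|x|^{-\alpha}dx<\infty$, which needs one ingredient you did not state, namely the bound, uniform in $\varepsilon$,
\[
\sup_{\varepsilon>0}\Bigl|\int_{|y-x|>\varepsilon}\Bigl(1-\tfrac{h_\gamma(y)}{h_\gamma(x)}\Bigr)\nu(x,y)\,dy\Bigr|\le C_\gamma|x|^{-\alpha},
\]
obtained from a second-order Taylor expansion on $\varepsilon<|y-x|<|x|/2$ (the gradient term vanishes by symmetry) plus direct estimates off that region; with it, dominated convergence handles the remainder term, while monotone convergence handles the two nonnegative terms, all directly for such $u$. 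In the remaining case $\int|u|^p|x|^{-\alpha}dx=\infty$ and $\beta>0$, inequality \eqref{e:subgoal} forces $\EEE_p[u]=\infty$, so both sides of \eqref{e:main} are infinite; for $\beta=0$ the identity reduces to your symmetrization step. With these modifications your argument closes, but as written the approximation step would fail.
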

\noindent
In particular, for $\beta=(d-\alpha)/p$ we obtain
\begin{align}\label{e:ip}
\EEE_p{[u]} &= \kappa_{(d-\alpha)/p} \int_{\Rd} \frac{|u(x)|^p}{|x|^{\alpha}} dx \\
&\quad+ \frac{1}{p}\int_{\Rd} \int_{\Rd} F_p\left(\frac{u(x)}{h(x)}, \frac{u(y)}{h(y)}\right) h(x)^{p-1}h(y) \nu(x,y) \,dy \,dx,\nonumber
\end{align}
and, of course,
\begin{align}
\EEE_p{[u]}&\geq \kappa_{(d-\alpha)/p} \int_{\Rd} \frac{|u(x)|^p}{|x|^{\alpha}} \,dx
\quad \text{ for all } \quad u \in L^p(\Rd).
\label{e.Hp}
\end{align}
The results extend the Hardy identities from \cite{MR3460023} and the ground-state representations of
Frank, Lieb and Seiringer in \cite[Proposition 4.1]{MR2425175}, see also  Frank and Seiringer \cite{MR2469027} and Beckner \cite{MR2984215}.
The proofs of \eqref{e:main} and \eqref{e:ip} are given in Section~\ref{sec:Hi}.
In Lemma~\ref{thm:lem7} of Section~\ref{sec:Hi}
we also show that \eqref{e.Hp} improves \eqref{e:subgoal}, namely for $p\neq 2$ we have
\begin{equation}\label{e.i}
\kappa_{(d-\alpha)/p}>\frac{4(p-1)}{p^2} \,
\frac{2^\alpha\Gamma\left(\frac{d+\alpha}{4}\right)^2}
{\Gamma\left(\frac{d-\alpha}{4}\right)^{2}}.
\end{equation}
\begin{figure}
\includegraphics[width=12cm]{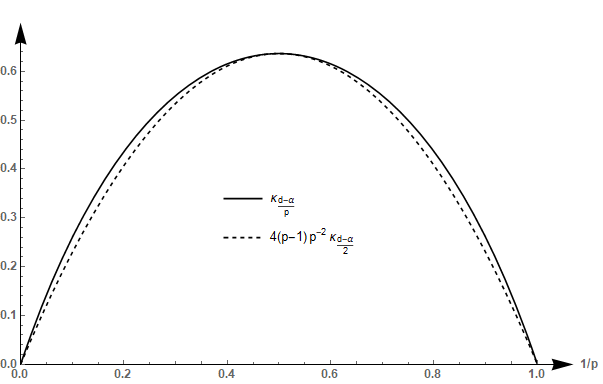}
\caption{Illustration of \eqref{e.i} for $d=3, \alpha=1$}\label{fig:comp}
\end{figure}
Figure \ref{fig:comp} compares both sides of \eqref{e.i}, by showing $\kappa_{(d-\alpha)/p}$ of \eqref{e.Hp} (solid line) and $4(p-1)p^{-2}\kappa_{(d-\alpha)/2}$ of \eqref{e:subgoal} (dashed line) as functions of $1/p\in (0,1]$ for $d=3$ and $\alpha=1$.
Here is a statement deeper than \eqref{e.i}, which we prove in Section~\ref{s:opt}.
\begin{thm}\label{thm:Theorem2}
The constant in \eqref{e.Hp} is sharp.
\end{thm}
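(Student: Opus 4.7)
The plan is to build a minimizing sequence using identity \eqref{e:ip}. With $\beta=(d-\alpha)/p$ and $h=h_\beta$,
\[
\EEE_p[u]-\kappa_{(d-\alpha)/p}\int_{\Rd}\frac{|u(x)|^p}{|x|^{\alpha}}\,dx=R[u]\ge 0,
\]
where
\[
R[u]:=\frac{1}{p}\int_{\Rd}\int_{\Rd} F_p\!\left(\frac{u(x)}{h(x)},\frac{u(y)}{h(y)}\right) h(x)^{p-1}h(y)\,\nu(x,y)\,dy\,dx.
\]
Thus it suffices to exhibit $u_n\in L^p(\Rd)$ for which $R[u_n]=o\!\bigl(\int|u_n|^p/|x|^{\alpha}\,dx\bigr)$ as $n\to\infty$. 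Since $F_p(1,1)=0$, the formal extremizer is $u=h$, but $h\notin L^p(\Rd)$; I truncate $h$ at logarithmic scales so that $u_n/h$ is essentially constant on a widening band.

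Fix $\psi\in C_c^{\infty}(\R)$ with $0\le\psi\le 1$, $\psi\equiv 1$ on $[-1,1]$, $\operatorname{supp}\psi\subset[-2,2]$, and set $\phi_n(x):=\psi(\log|x|/n)$, $u_n:=h\phi_n$. Then $u_n$ is supported in $\{e^{-2n}\le|x|\le e^{2n}\}$ and dominated there by $|x|^{-(d-\alpha)/p}$, so $u_n\in L^p(\Rd)$. Polar coordinates and the substitution $t=\log|x|/n$ give
\[
\int_{\Rd}\frac{|u_n(x)|^p}{|x|^\alpha}\,dx=\int_{\Rd}|x|^{-d}\psi(\log|x|/n)^{p}\,dx=|\Sd|\,n\int_{\R}\psi(t)^p\,dt,
\]
which is linear in $n$. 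Since $u_n/h=\phi_n$, the remainder equals $R_n=R[u_n]=\frac{1}{p}\int_{\Rd}\int_{\Rd}F_p(\phi_n(x),\phi_n(y))\,h(x)^{p-1}h(y)\,\nu(x,y)\,dy\,dx$, and the goal reduces to proving $R_n=o(n)$.

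I would estimate $R_n$ by splitting into a near zone $\{|y-x|\le|x|/2\}$ and its complement. In the near zone $|y|$ is comparable to $|x|$, so $h(x)^{p-1}h(y)$ has order $|x|^{-(d-\alpha)}$, and $|\phi_n(x)-\phi_n(y)|\le\|\psi'\|_\infty|x-y|/(n|x|)$. Combining a Taylor-based quadratic bound on $F_p$ for $p\ge 2$ (and, for $1<p<2$, the pair $F_p(a,b)\le C_p(a-b)^2\max(a,b)^{p-2}$ together with $F_p(a,b)\le C_p|a-b|^p$ to handle the support boundary) with the convergence of $\int_{|z|<|x|/2}|z|^{2-d-\alpha}\,dz$ (valid since $\alpha<2$) yields a near-zone contribution of $O(n^{-1})$. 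In the far zone I use $F_p\le C_p$ uniformly on $[0,1]^2$ and note that the integrand vanishes whenever $\phi_n(x)=\phi_n(y)\in\{0,1\}$, so at least one of $x,y$ must lie in a transition annulus of the form $\{e^{\pm n}\le|\cdot|\le e^{\pm 2n}\}$ or outside the support. A case analysis on the radial location of $x$ and $y$, exploiting the dilation invariance of $h(x)^{p-1}h(y)|x-y|^{-d-\alpha}$ under simultaneous scaling by $e^n$, reduces each surviving integral to an $n$-independent convergent one, giving a far-zone contribution of $O(1)$. Hence $R_n=O(1)=o(n)$, so $\EEE_p[u_n]/\int|u_n|^p/|x|^\alpha\to\kappa_{(d-\alpha)/p}$, proving Theorem~\ref{thm:Theorem2}.

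The hardest step is the far-zone estimate: the nonlocality of $\nu$ couples widely separated scales and $h^{p-1}$ has a non-integrable singularity at the origin, so naive bounds diverge. The logarithmic cutoff is essential, both for the linear growth of the Hardy denominator and because the transition annuli have a scale-invariant shape, so each ``crosswise'' contribution to $R_n$ can, after rescaling, be reduced to an $n$-independent convergent integral. For $1<p<2$ the non-smoothness of $F_p$ near the support boundary of $\phi_n$ is the main technical nuisance.
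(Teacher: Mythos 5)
Your overall strategy is genuinely different from the paper's: you work at the critical exponent $\beta=(d-\alpha)/p$ and use the ground-state identity \eqref{e:ip} with logarithmic-cutoff quasi-extremizers $u_n=h\phi_n$, comparing the remainder $R_n$ against the Hardy term $\asymp n$, whereas the paper takes $\beta$ strictly between $(d-\alpha)/p$ and $(p-1)(d-\alpha)/p$, uses the signed decomposition of Lemma~\ref{thm:lem4} with the test function $|x|^{-\beta/(p-1)}\wedge|x|^{-\beta}\wedge R^{\mu-\beta}|x|^{-\mu}$ (negative cross term, error terms vanishing as $R\to\infty$), and handles $1<p<2$ by duality. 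Your route is viable in principle, but the far-zone estimate is wrong as stated, and this is a genuine gap. Using only $F_p\le C_p$ there cannot give $O(1)$: take $x,y$ both in the outer transition annulus $T_+=\{e^n\le|\cdot|\le e^{2n}\}$ with $|x-y|>|x|/2$ and $|y|\asymp|x|$. Then $h(x)^{p-1}h(y)\nu(x,y)\approx |x|^{-(d-\alpha)}|x|^{-d-\alpha}$ and the admissible $y$'s have volume $\approx|x|^d$, so the inner integral is $\gtrsim|x|^{-d}$ and $\int_{T_+}|x|^{-d}\,dx\asymp n$; the same order $n$ appears for pairs in $T_+$ at moderately separated radii. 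Dilation by $e^n$ only maps $T_+$ onto $\{1\le|\cdot|\le e^n\}$, which still has logarithmic width $n$, so no case analysis based solely on the uniform bound reduces these pieces to $n$-independent integrals. Since this is the same order as your denominator, $R_n=o(n)$ does not follow from the estimates you give. The repair is to use everywhere (not only in the near zone) the radial log-Lipschitz bound $|\phi_n(x)-\phi_n(y)|\le C\min\{1,|\log(|x|/|y|)|/n\}$ together with $F_p(a,b)\le C|a-b|^{\min(p,2)}$ on $[0,1]^2$: in logarithmic variables the kernel integrates to $e^{-\gamma|\log|x|-\log|y||}$ with $\gamma=\alpha+(d-\alpha)/p>0$, so the transition-annulus contributions become $O(n^{1-\min(p,2)})=o(n)$ and the bulk-versus-exterior pairs are exponentially small.

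A second, smaller gap is the case $1<p<2$, which you flag but do not resolve. Near the diagonal your bound $F_p(a,b)\le C|a-b|^p$ leads to $\int_{|y-x|\le|x|/2}|x-y|^{p-d-\alpha}\,dy$, which diverges when $p\le\alpha$, while the quadratic bound $C(a-b)^2(a\vee b)^{p-2}$ degenerates near both edges $|x|\approx e^{\pm 2n}$ where $\phi_n$ vanishes; so an extra interpolation or a modified cutoff is needed there. The simplest fix is the paper's duality step: with $p'=p/(p-1)$ one has $\EEE_{p'}[u^{\langle p-1\rangle}]=\EEE_p[u]$, $\int|u^{\langle p-1\rangle}|^{p'}|x|^{-\alpha}dx=\int|u|^{p}|x|^{-\alpha}dx$ and $\kappa_{(d-\alpha)/p'}=\kappa_{(d-\alpha)/p}$, so sharpness for $p>2$ yields sharpness for $1<p<2$. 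With these two repairs your log-cutoff argument does prove Theorem~\ref{thm:Theorem2}; as written, it does not.
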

The result was previously known only for $p=2$; we again refer to \cite[(2.6)]{MR0436854}, \cite[proof of Theorem 2]{MR1254832} or \cite{MR1717839}.

For the sake of comparison we recall
the fractional Hardy-Sobolev inequality from
\cite[Theorem 1.1]{MR2469027}:
\begin{equation}\label{e:frank}
 \int_{\Rd}\int_{\Rd} \frac{|u(x)-u(y)|^p}{|x-y|^{d+ps} } \,dx\,dy
\geq \mathcal C_{d,s,p} \int_{\R^d} \frac{|u(x)|^p}{|x|^{ps}}\,dx.
\end{equation}
The constant
$C_{d,s,p}$ is known and sharp, too.
Of course,
the forms in \eqref{e:dEp} and  \eqref{e:frank}
are different generalizations of the quadratic form in \eqref{e:def-quad-form}. Correspondingly, the inequalities \eqref{e.Hp} and \eqref{e:frank} are different optimal generalizations of the fractional Hardy inequality \eqref{e:hardy-quad}.
In passing we refer the reader to \cite[Section 6]{2020arXiv200601932B} for an additional insight into the comparison of \eqref{e.Hp} and \eqref{e:frank} and to Frank and Seiringer \cite{MR2723817} for a version of \eqref{e:frank} for half-spaces.

Not unexpectedly, our main application of
Theorem~\ref{thm:Theorem1} and \ref{thm:Theorem2} is to the $L^p$ contractivity of the Feynman-Kac semigroup $\tilde P_t$ generated by $\Delta^{\alpha/2}+\kappa_\delta |x|^{-\alpha}$ with $\kappa_\delta$ given by \eqref{e.dkb}.
\begin{thm}\label{t.znHp}
Let $0<\alpha<2\wedge d$, 
$\delta\in[0,(d-\alpha)/2]$,
$1<p<\infty$ and $0<t<\infty$. The operator $\tilde P_t$ is a contraction on $L^p(\Rd)$ if and only if $\kappa_\delta \le \kappa_{(d-\alpha)/p}$.
\end{thm}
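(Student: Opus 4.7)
The plan is to execute the standard differentiate-the-$L^p$-norm argument that ties $L^p$-contractivity of $\tilde P_t$ to a comparison between $\EEE_p$ and the Hardy weight, and then feed in Theorem~\ref{thm:Theorem1} (for the forward direction) and Theorem~\ref{thm:Theorem2} (for the converse).

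First I would fix a well-behaved dense class, say $u \in C_c^\infty(\Rd \setminus \{0\})$, and differentiate $\|\tilde P_t u\|_p^p$ in $t$. Using the forward equation $\partial_t \tilde P_t u = \Delta^{\alpha/2} \tilde P_t u + \kappa_\delta |x|^{-\alpha} \tilde P_t u$ and the symmetry $\nu(x,y)=\nu(y,x)$, the pairing with $(\tilde P_t u)^{\langle p-1\rangle}$ symmetrizes to $\EEE_p[\tilde P_t u]$, yielding
\[
\frac{d}{dt}\|\tilde P_t u\|_p^p = -p\,\EEE_p[\tilde P_t u] + p\kappa_\delta \int_{\Rd} \frac{|\tilde P_t u(x)|^p}{|x|^\alpha}\,dx.
\]
This identity is the bridge between the semigroup norm and the two forms.

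For sufficiency ($\Leftarrow$), when $\kappa_\delta \le \kappa_{(d-\alpha)/p}$ the Hardy inequality \eqref{e.Hp} applied to $v = \tilde P_t u$ forces the right-hand side above to be nonpositive, so $t \mapsto \|\tilde P_t u\|_p$ is nonincreasing; passing from the dense class to all of $L^p(\Rd)$ gives contractivity. For necessity ($\Rightarrow$), assuming $\kappa_\delta > \kappa_{(d-\alpha)/p}$, the sharpness statement Theorem~\ref{thm:Theorem2} supplies a test function $u$ with $\EEE_p[u] < \kappa_\delta \int_{\Rd} |u(x)|^p/|x|^\alpha\,dx$, which makes the right-hand side of the displayed identity strictly positive at $t = 0^+$. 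Hence $\|\tilde P_t u\|_p > \|u\|_p$ for all sufficiently small $t > 0$, which (combined with the semigroup property) contradicts contractivity of $\tilde P_t$.

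The main technical obstacle is to rigorously justify the formal manipulations: that $\tilde P_t u$ lies in $L^p(\Rd)$ and in the natural domain of $\EEE_p$ so that the pairing and symmetrization are valid, and that $t \mapsto \|\tilde P_t u\|_p^p$ is differentiable with the stated derivative. For $\delta \in [0,(d-\alpha)/2]$ this can be handled via the sharp two-sided heat kernel estimates for $\Delta^{\alpha/2} + \kappa_\delta |x|^{-\alpha}$ proved in \cite{MR3933622}, which pin down the behaviour of $\tilde P_t u$ both at the origin and at infinity and thereby legitimize the differentiation under the integral. For the converse direction, if the extremizer produced by sharpness is not regular, a standard mollification together with a cutoff near $0$ and $\infty$ approximates it in the topology which makes both terms of the displayed identity continuous, preserving the strict inequality.
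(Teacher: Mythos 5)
Your outline reproduces the paper's own informal idea (differentiate $t\mapsto\|\tilde P_t u\|_p^p$, use \eqref{e.Hp} for the ``if'' part and sharpness for the ``only if'' part), but the step you defer to ``heat kernel estimates legitimize the differentiation'' is the actual content of the proof, and the fix you propose does not work as stated: the bounds of \cite{MR3933622} in fact show that your displayed identity can be meaningless for the untruncated semigroup. Indeed, by \eqref{eq:mainThmEst}, for every $t>0$, $\delta>0$ and $0\le u\in C_c^\infty(\Rd\setminus\{0\})$, $u\not\equiv 0$, one has $\tilde P_t u(x)\ge c\,|x|^{-\delta}$ near the origin; hence at the critical coupling $\delta=(d-\alpha)/p$ (the equality case, which the theorem must cover) $\int_{B(0,1)}|\tilde P_t u(x)|^p|x|^{-\alpha}\,dx=\infty$, and then \eqref{e.Hp} forces $\EEE_p[\tilde P_t u]=\infty$ as well, so the right-hand side of your identity is of the form ``$\infty-\infty$''; moreover nothing in your argument guarantees $\tilde P_t u\in\mathcal D_p(\Delta^{\alpha/2})$, which you need in order to invoke \eqref{e.pfag} and symmetrize the pairing into $\EEE_p$. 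The ingredient you are missing is the truncation $q^{(M)}=q_\delta\wedge M$: for the bounded potential, Phillips' perturbation theorem produces a semigroup $\tilde P^{(M)}_t$ on $L^p(\Rd)$ whose generator has domain $\mathcal D_p(\Delta^{\alpha/2})$, every term in \eqref{eq:diff} is finite, the differentiation of $\|u^{(M)}(t)\|_p^p$ is justified by the elementary lemmas of Section~\ref{s.A}, and contractivity passes to $\tilde P_t$ by the monotone convergence $\tilde p_t^{(M)}\uparrow\tilde p_t$ together with the splitting $f=f_+-f_-$. The same truncation, combined with $\tilde P^{(M)}_t f\le\tilde P_t f$ for $f\ge 0$, is what makes the derivative-at-zero computation in the ``only if'' direction rigorous; working directly with $\tilde P_t$ there runs into the same domain and finiteness problems, now at supercritical coupling.

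There is a second gap in the converse. The near-extremizer produced in the proof of Theorem~\ref{thm:Theorem2} is neither smooth nor supported away from the singularity, and ``standard mollification with a cutoff'' is not automatic, because $\EEE_p$ is not quadratic and its continuity along a mollification has to be proved, not assumed. The paper's Lemma~\ref{l.tm} does exactly this, and by a non-obvious route: truncate by $\phi_\epsilon$, mollify the power $u^{\langle p/2\rangle}$, take the $\langle 2/p\rangle$ power back (smoothness survives only because all functions involved are radially decreasing), and pass to the limit in $\EEE_p$ via uniform integrability and Vitali's theorem; the lemma is proved for $p>2$ only, the range $1<p<2$ being handled by duality through $\kappa_{(d-\alpha)/p}=\kappa_{(d-\alpha)/p'}$ --- a reduction your sketch would also need but does not mention. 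So your plan is the right skeleton, but it becomes a proof only after supplying these two ingredients (the $q\wedge M$ truncation scheme and a genuine smooth-test-function lemma), which constitute most of Section~\ref{s.ap}.
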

The proof of this characterization is given in Section~\ref{s.ap}, see also Remark~\ref{r.n} there. 
The result is analogous to the classical case ($\alpha=2$), where the operator $\Delta + \kappa|x|^{-2}$ generates a contraction semigroup on $L^p(\R^d)$ if and only if  $\kappa\le \kappa_{(d-2)/p}= (d-2)^2 (p-1)p^{-2}$,
see Kovalenko, Perelmuter and Semenov \cite{MR622855}, see also Liskevich and Semenov \cite[Theorems 1 and 2]{MR1409835} and \cite[Corollary 1.2]{MR2259099}. 
Theorem~\ref{t.znHp} 
is illustrated by Figure~\ref{fig:thm3}.
Notably,   if $\alpha=2$  then $\kappa_\beta=\beta(d-2-\beta)$ and \eqref{e.i} becomes equality, so we have improved Hardy inequality 
for the \textit{nonlocal} operator $\Delta^{\alpha/2}$ but not for the \textit{local} operator $\Delta$.
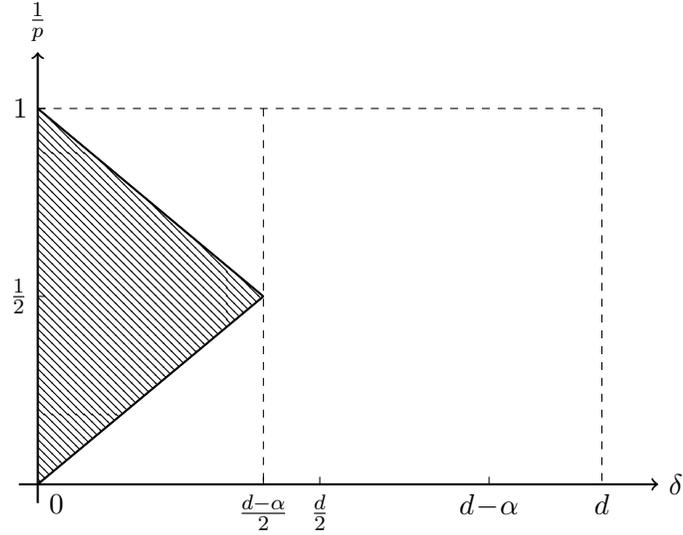
\begin{figure}
\begin{tikzpicture}[
    scale=5,
    axis/.style={thick, ->},
    important line/.style={thick},
    dashed line/.style={dashed, thin},
    ]
    \draw[axis] (-0.05,0)  -- (1.65,0) node(xline)[right]{$\delta$};
    \draw[axis] (0,-0.05) -- (0,1.15) node(yline)[above] {$\frac{1}{p}$};
		\draw[dashed line] (0,1) -- (1.5,1);
		\draw[dashed line] (1.5,1) -- (1.5,0);
		\draw[important line] (0,1) -- (0.6,0.5);
		\draw[important line] (0,0) -- (0.6,0.5);
		\draw[dashed line] (0.6,1) -- (0.6,0);
		\draw [pattern=north west lines] (0,0) -- (0.6,0.5) -- (0,1);
		\draw[-] (0.75,0.0) -- (0.75,0.02);
		\draw[-] (0.6,0.0) -- (0.6,0.02);
		\draw[-] (1.2,0.0) -- (1.2,0.02);
		\draw[-] (0.0,0.5) -- (0.02,0.5);
		\draw[-] (0.0,1) -- (0.02,1);
		\draw[-] (0,0) -- (0.6,0) coordinate[label = {below:$\frac{d-\alpha}{2}$}];
		\draw[-] (0,0) -- (0.75,0) coordinate[label = {below:$\frac{d}{2}$}];
		\draw[-] (0,0) -- (1.5,0) coordinate[label = {below:$d$}];
    \draw[-] (0,0) -- (1.2,0) coordinate[label = {below:$d\!-\!\alpha$}];
		\draw[-] (0,0) -- (0,0.5) coordinate[label = {left:$\frac{1}{2}$}];
		\draw[-] (0,0) -- (0,1) coordinate[label = {left:$1$}];
    \draw[-] (0,0) -- (0.05,0) coordinate[label = {below:$0$}];
    \end{tikzpicture}
		\caption{The shaded area shows the values of $1/p$, for which, given $\delta\in [0,(d-\alpha)/2]$ and $\Delta^{\alpha/2}+\kappa_{\delta}|x|^{-\alpha}$,
$\tilde P_t$ are contractive on $L^p(\Rd)$.}\label{fig:thm3}
		\end{figure}		

Here is the last theorem of the paper.
\begin{thm}\label{t.bd}
Let $0<\alpha<2\wedge d$, $\delta\in[0,(d-\alpha)/2]$, $1<p<\infty$ and $0<t<\infty$. The operator $\tilde P_t$ is bounded on $L^p(\Rd)$ if and only if $\delta < {d/p^*}$, where $p^*=\max\{p,p/(p-1)\}$.
\end{thm}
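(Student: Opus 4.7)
Our approach leverages the sharp two-sided heat-kernel estimates from \cite{MR3933622}, which for $\delta \in [0,(d-\alpha)/2]$ take the form
\[
\tilde p_t(x,y) \asymp H_t(x)\, H_t(y)\, p_t(x,y), \qquad H_t(x) := \left(1 + \frac{t^{1/\alpha}}{|x|}\right)^{\!\delta},
\]
where $p_t$ is the heat kernel of $\Delta^{\alpha/2}$. Here $H_t(x) \asymp (t^{1/\alpha}/|x|)^{\delta}$ for $|x|\le t^{1/\alpha}$ and $H_t \asymp 1$ otherwise; splitting $H_t = H_N + H_F$ with $H_N := H_t \mathbf{1}_{\{|\cdot|\le t^{1/\alpha}\}}$ and $H_F := H_t - H_N$, the function $H_N$ is compactly supported with $H_N \in L^s(\Rd)$ iff $s\delta<d$, while $H_F \in L^\infty(\Rd)$.

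For the necessity of $\delta < d/p^*$, I would test on $f = \mathbf{1}_{B(0,t^{1/\alpha})}$. The lower heat-kernel bound, together with $p_t(x,y) \gtrsim t^{-d/\alpha}$ on $B(0,t^{1/\alpha})^2$, gives $\tilde P_t f(x) \gtrsim |x|^{-\delta}$ on $B(0,t^{1/\alpha})$, so $\tilde P_t f \in L^p$ forces $\delta<d/p$. Symmetry of the kernel makes $\tilde P_t$ self-adjoint on $L^2$, so by duality $\tilde P_t$ is bounded on $L^p$ iff it is bounded on $L^{p'}$; repeating the test on $L^{p'}$ yields $\delta<d/p'$. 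Together, $\delta<\min(d/p,d/p')=d/p^*$.

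For sufficiency, assume $\delta<d/p^*$. The upper kernel bound yields $\tilde P_t f(x) \le C\, H_t(x)\, P_t(H_t f)(x)$, and expanding $H_t = H_N + H_F$ produces four pieces $H_i P_t(H_j f)$ with $i,j\in\{N,F\}$. Each is bounded in $L^p$ by combining H\"older's inequality with Young's inequality for convolutions, which gives $P_t:L^r\to L^q$ boundedly for $1\le r\le q\le\infty$ (since $p_t\in L^s(\Rd)$ for every $s\in[1,\infty]$). Specifically, $H_F P_t(H_N f)$ is controlled via $\|H_N f\|_1 \le \|H_N\|_{p'}\|f\|_p$ and $P_t:L^1\to L^p$, requiring $\delta<d/p'$; the piece $H_N P_t(H_F f)$ uses $\|H_N\|_p\,\|P_t(H_F f)\|_\infty \le C\|H_N\|_p\|f\|_p$, requiring $\delta<d/p$; the piece $H_N P_t(H_N f)$ is dominated by $\|H_N\|_p\,\|P_t(H_N f)\|_\infty \le C\|H_N\|_p\|H_N\|_{p'}\|f\|_p$, requiring both; and $H_F P_t(H_F f)$ follows from the $L^p$-contractivity of $P_t$.

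The principal obstacle is the piece $H_N P_t(H_N f)$, in which the outer multiplication by the singular $H_N$ and the inner blow-up of $H_N f$ must be tamed at the same time; it is precisely here that both halves $\delta<d/p$ and $\delta<d/p'$ of the hypothesis $\delta<d/p^*$ are jointly used. In contrast to Theorem~\ref{t.znHp}, the constant in the bound is allowed to depend on $t$, so no sharp Hardy constants appear and the argument rests purely on local integrability of $|x|^{-\delta}$ in $L^p$ and $L^{p'}$ near the origin.
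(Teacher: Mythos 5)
Your proof is correct and follows essentially the same route as the paper: the kernel factorization $\tilde p_t(x,y)\approx H(x)H(y)p_t(x,y)$ from \cite{MR3933622}, a near/far splitting at the ball of radius $t^{1/\alpha}$ yielding the same four pieces (bounded kernel for near--near, the $L^p\to L^\infty$ smoothing bound for the mixed terms, $L^p$-contractivity of $P_t$ for far--far), and the indicator of the ball as test function for necessity. The only cosmetic differences are that the paper estimates the bilinear form $\int\!\!\int g\,H H p_1 f$ instead of $\tilde P_t f$ directly, and for the dual half of the necessity it exhibits an $f\in L^p_+$ with $\tilde P_1 f\equiv\infty$ rather than invoking the duality argument, which the paper also records and which is equally valid.
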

The result is proved in Section~\ref{s.ap}. 
As we shall see, the result is a consequence of the estimates of $\tilde P_t$ given in \cite{MR3933622}.
An illustration of Theorem~\ref{t.bd} is given in Figure~\ref{fig:thm4}.
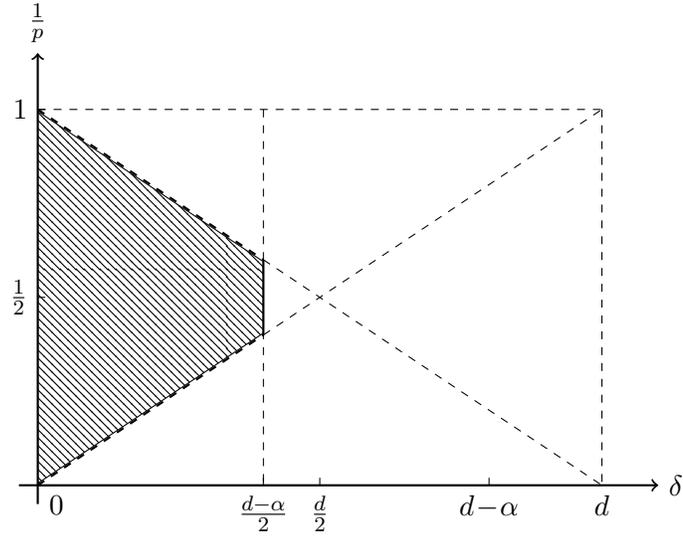
\begin{figure}
			\centering
				\begin{tikzpicture}[
    scale=5,
    axis/.style={thick, ->},
    important line/.style={thick},
    dashed line/.style={dashed, thin},
		dashed important line/.style={dashed, thick},
    ]
    \draw[axis] (-0.05,0)  -- (1.65,0) node(xline)[right]{$\delta$};
    \draw[axis] (0,-0.05) -- (0,1.15) node(yline)[above] {$\frac{1}{p}$};
    \draw[dashed line] (0.6,0.4) -- (1.5,1);
    \draw[dashed line] (0.6,0.6) -- (1.5,0);
		\draw[dashed line] (0,1) -- (1.5,1);
		\draw[dashed line] (1.5,1) -- (1.5,0);
		\draw[important line] (0.6,0.4) -- (0.6,0.6);
		\draw[dashed line] (0.6,1) -- (0.6,0);
		\draw[-] (0.75,0.0) -- (0.75,0.02);
		\draw[-] (0.6,0.0) -- (0.6,0.02);
		\draw[-] (1.2,0.0) -- (1.2,0.02);
		\draw[-] (0.0,0.5) -- (0.02,0.5);
		\draw[-] (0.0,1) -- (0.02,1);
		\draw[-] (0,0) -- (0.6,0) coordinate[label = {below:$\frac{d-\alpha}{2}$}];
		\draw[-] (0,0) -- (0.75,0) coordinate[label = {below:$\frac{d}{2}$}];
		\draw[-] (0,0) -- (1.5,0) coordinate[label = {below:$d$}];
    \draw[-] (0,0) -- (1.2,0) coordinate[label = {below:$d\!-\!\alpha$}];
		\draw[-] (0,0) -- (0,0.5) coordinate[label = {left:$\frac{1}{2}$}];
		\draw[-] (0,0) -- (0,1) coordinate[label = {left:$1$}];
    \draw[-] (0,0) -- (0.05,0) coordinate[label = {below:$0$}];	
		\draw[dashed important line] (0,1) -- (0.6,0.6);
		\draw[dashed important line] (0,0) -- (0.6,0.4);
		\draw [pattern=north west lines] (0.00,0.005) -- (0.6,0.405) -- (0.6,0.595) -- (0,0.995);
    \end{tikzpicture}
		\caption{The shaded area shows the values of $1/p$, for which, given  $\delta\in [0,(d-\alpha)/2]$ and $\Delta^{\alpha/2}+\kappa_{\delta}|x|^{-\alpha}$,
$\tilde P_t$ are bounded on $L^p(\Rd)$.}\label{fig:thm4}
	\end{figure}

The structure of the paper is as follows. In Section~\ref{sec:Pre} we give preliminaries and ponder  definitions. The proof of Theorem~\ref{thm:Theorem1} is given in Section~\ref{sec:Hi}. The  proof of Theorem~\ref{thm:Theorem2} is in Section~\ref{s:opt}.
In Section~\ref{s.ap} we prove Theorem~\ref{t.znHp} and Theorem~\ref{t.bd}.
In Section~\ref{s.d} 
we point out to related results in the literature and broader perspectives.

\textbf{Acknowledgements.} We thank Rupert Frank, Damir Kinzebulatov, Bart\l{}omiej Dyda, Jerome Goldstein, Kamil Kaleta, Mateusz Kwa\'{s}nicki, Adam Nowak, Sergey Pirogov, R\'ene Schilling and Karol Szczypkowski for helpful comments, discussions and references. We thank Yuli Semenov for an inspiring lecture in September 2018 in Wroc\l{}aw.

\subsection{Further discussion}\label{s.d}

The main results of the paper are the optimal Hardy identity \eqref{e:ip} and inequality \eqref{e.Hp}. In this section, however, we try to give a broader perspective for the considered Sobolev-Bregman forms and point out to connections and inspirations. It is well known that
the ramification of the potential theory of \textit{symmetric} Markovian processes and semigroups by means of the theory of Dirichlet forms on $L^2$ spaces turned out extremely succesful \cite{MR2778606}, Ma and R\"{o}ckner \cite{MR1214375}. Therefore many authors worked to extend it to the case of the $L^p$ spaces. The development is related to the fact that
$\langle -A f, f^{p-1}\rangle\ge 0$ for $f$ in the domain of the generator,
see \eqref{e.pfag}
below. We recall that for $p\in (1,\infty)$ the dual space of $L^p$ is, of course,  $L^{p/(p-1)}$ and for $u\in L^p$ we have $u^{\langle p-1\rangle}\in L^{p/(p-1)}$, and $\|u\|_p^{p}=\int |u|^p=\int |u^{\langle p-1\rangle}|^{p/(p-1)}=\int u^{\langle p-1\rangle}u$. Therefore $\|u\|_p^{2-p} u^{\langle p-1\rangle}$ yields a linear functional on $L^p$ appropriate for testing the dissipativity of generators in the Lumer--Phillips theorem,
see, e.g., Pazy \cite[Section 1.4]{MR710486} or \cite[p. 3]{MR3235527}. For the semigroups generated by local operators we refer to Langer and Maz'ya \cite{MR1694522}, Sobol and Vogt \cite[Theorem 1.1]{MR1923627} and the monograph 
\cite{MR3235527},
see also
the historical comments in Section~4.7 therein.

As we shall experience first-hand in Section~\ref{s.ap}, the forms $\mathcal E_p$ capture the evolution of $L^p$ norms of functions upon the action of operator semigroups. The connection is well known, since at least 
the paper of
Varopoulos \cite[(1,1)]{MR803094}.
It was then used, e.g., to study 
perturbations of semigroups on $L^p$ 
in \cite[Theorem 3.2]{MR1407327}
and \cite{MR1409835}.
For nonlocal operators we refer the reader to Farkas, Jacob and Schilling \cite[(2.4)]{MR1808433} and to the monograph of Jacob \cite[(4.294)]{MR1873235}.
As a rule the authors use the intermediary $L^2$ setting, relying on inequalities similar to \eqref{e.cHk} and to
\eqref{e.cHkf} below. Therefore the resulting inequalities
do not allow for optimal constants, certainly in the case of nonlocal generators, cf. \eqref{e:subgoal} and \eqref{e.Hp}; see also \cite[p. 231]{MR3235527}.
Suboptimal constants may lead to suboptimal qualitative results,
as shall be evident from the proof of Theorem~\ref{t.znHp}.
We avoid the inaccuracy of \eqref{e.cHk} by systematically using the Bregman divergence -- we work \emph{intrinsically} in $L^p$ and therefore obtain the optimal constants and the optimal range of exponents in Theorems \ref{thm:Theorem2} and \ref{t.znHp}.

We remark that \eqref{e.cHkf} and \eqref{e.pfag} can be traced back to the papers \cite{MR803094} and Liskevich, Semenov \cite{MR1160303}. In our notation it reads there as $\langle -A f, f^{\langle p-1\rangle}\rangle\approx \langle -A^{1/2}f^{\langle p/2\rangle},A^{1/2}f^{\langle p/2\rangle}\rangle$, see \cite[Theorem~1]{MR1160303}, see also Kinzebulatov and Semenov \cite[Proposition 8]{kinzebulatov2020fractional} 
for recent developments 
and \cite{kinzebulatov2020admissible} for applications to
symmetric stable processes with drift, understood as 
solutions of stochastic differential equations.

Finally, it may be hard to point out the first occurrence of \eqref{e.cHk}. Our best guess is \cite[Lemma 1]{MR1160303} and  \cite[Lemma 2.1]{MR1407327}, see also  \cite[p. 246]{MR803094}, Bakry \cite[p. 39]{MR1307412}, Stroock \cite[Lemma 9.9 and p. 134]{MR755154} and Carlen, Kusuoka and Stroock \cite[p. 269]{MR898496} for formulations with nonnegative arguments or one-sided comparison. We also point out to the calculations with forms and powers in Davies \cite[Chapter 2 and 3]{MR1103113}, to the calculations in
\cite[Section 7.6]{MR3235527} and
\cite[(2.19)]{2020arXiv200601932B}, and to Lemma~\ref{thm:incrineq} below.

\section{{Preliminaries}}\label{sec:Pre}

We use ``$:=$" to indicate definitions, e.g.,
$a \wedge b := \min \{ a, b\}$, $a \vee b := \max \{ a, b\}$, and $a_+:=a\vee 0$.
All the functions considered below are Borel measurable either by construction or assumptions. If $\mathcal F$ is a family of functions, then we let $\mathcal F_+=\{f\in \mathcal F: f\ge 0\}$.
For nonnegative functions $f$ and $g$ we write $f(x)\approx g(x)$ to indicate that there are numbers $0<c<C<\infty$ such that $cf(x)\le  g(x)\le C f(x)$ for all the considered arguments $x$.
We call such comparisons two-sided or \textit{sharp}.
For an open subset $D$ of the $d$-dimensional Euclidean space $\R^d$, we let $C^\infty_c (D)$ be the space of smooth functions with compact support in $D$.

Note that $(|x|^p)'=p x^{\langle p - 1 \rangle}$ on $\R$ for $p\in (1,\infty)$. We shall focus on related inequalities.
\begin{lem}
\label{thm:incrineq}
For $p \in (1,\infty)$ there are constants $C,C'>0$ such that for $a,b\in \R$,
\begin{align}\label{eq:2d}
0\leq|b|^p - |a|^p - pa^{\langle p - 1 \rangle} (b-a) &\leq C |b-a|^{\lambda} (|b|+|a|)^{p-\lambda}, \quad \lambda \in [0,2],\\
\label{eq:2c}
||b|^p - |a|^p|&\le (p+C) |b-a|(|b|+|a|)^{p-1},\\
\label{eq:2dd}
| b^{\langle p - 1 \rangle} - a^{\langle p - 1 \rangle}|&\leq C' |b-a|^{\lambda} (|b|+|a|)^{p-1-\lambda}, \quad \lambda \in [0,1].
\end{align}
\end{lem}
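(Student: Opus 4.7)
All three inequalities flow from the identity $(|x|^p)' = p\,x^{\langle p-1\rangle}$, the convexity of $x\mapsto |x|^p$, and the trivial bound $|b-a|\le |a|+|b|$. The lower bound $F_p(a,b)\ge 0$ in \eqref{eq:2d} is just the fact that the tangent of the convex function $|x|^p$ at $a$ lies below the graph. For \eqref{eq:2c} the mean value theorem applied to $f(x)=|x|^p$ produces $c$ between $a$ and $b$ with $|b|^p-|a|^p = p\,c^{\langle p-1\rangle}(b-a)$; since $|c|^{p-1}\le (|a|\vee|b|)^{p-1}\le (|a|+|b|)^{p-1}$, this gives \eqref{eq:2c} with $C=0$ (constant $p$ on the right).

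For the upper bound in \eqref{eq:2d} my key observation is a scale-monotonicity argument: since $|b-a|\le |a|+|b|$, the ratio $\bigl(|b-a|/(|a|+|b|)\bigr)^{2-\lambda}$ is at most $1$ for $\lambda\in[0,2]$, so an estimate of the form $F_p(a,b)\le C(b-a)^2(|a|+|b|)^{p-2}$ automatically upgrades to $F_p(a,b)\le C(b-a)^\lambda(|a|+|b|)^{p-\lambda}$ for every $\lambda\in[0,2]$. To prove the case $\lambda=2$ I use the integral representation
\[
F_p(a,b)=p(p-1)(b-a)^2 \int_0^1 (1-s)\,|a+s(b-a)|^{p-2}\,ds.
\]
When $p\ge 2$ the estimate $|a+s(b-a)|\le |a|+|b|$ together with $p-2\ge 0$ finishes the proof immediately. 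When $1<p<2$ the integrand is singular at zeros of $a+s(b-a)$, so I split by signs: for same-sign $a,b$ (WLOG $0\le a\le b$) I rewrite $F_p(a,b)=p(p-1)\int_a^b s^{p-2}(b-s)\,ds$ and distinguish the subregimes $b-a\le a$ (where $|a|+|b|\asymp a$ and $s^{p-2}\le a^{p-2}$ closes the case) and $b-a>a$ (where $|a|+|b|\asymp b-a$ and the crude estimate $F_p\le b^p$ is already sufficient); for opposite-sign $a,b$ one has $|b-a|=|a|+|b|$ and the direct expansion $F_p(a,b)=|b|^p+(p-1)|a|^p+p|a|^{p-1}|b|\le 2p(|a|+|b|)^p$ closes this case at once.

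The proof of \eqref{eq:2dd} follows the same blueprint. The same interpolation reduces matters to $\lambda=1$. For $p\ge 2$ the derivative $(p-1)|x|^{p-2}$ of $x\mapsto x^{\langle p-1\rangle}$ is bounded by $(p-1)(|a|+|b|)^{p-2}$ on $[a,b]$, and the mean value theorem closes the case. For $1<p<2$ I again split on signs: in the same-sign case the ratio $(b^{p-1}-a^{p-1})/[(b-a)(a+b)^{p-2}]$, parameterized by $t=a/b\in[0,1]$, is continuous with finite limits at both endpoints (the limit at $t=1$ computed by L'Hôpital), hence bounded; in the opposite-sign case the power-mean inequality gives $|a|^{p-1}+|b|^{p-1}\le 2^{2-p}(|a|+|b|)^{p-1}=2^{2-p}|b-a|(|a|+|b|)^{p-2}$.

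The principal obstacle, throughout, is the regime $1<p<2$: the missing $C^2$-regularity of $|x|^p$ at $0$ and the unboundedness of the derivative of $x^{\langle p-1\rangle}$ there prevent a uniform Taylor or Lipschitz argument, and force the explicit sign- and scale-based case analysis sketched above. Everything else reduces to elementary calculus and the inequality $|b-a|\le |a|+|b|$.
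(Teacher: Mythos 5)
Your proof is correct, but it follows a genuinely different route from the paper's. The paper reduces \eqref{eq:2d} with $\lambda=2$ to a one-variable inequality by setting $x=b/a$ (or cites \cite[Lemma 6]{MR3251822} for the two-sided version \eqref{eq:2e}), and closes it by continuity/compactness on $\R\setminus\{1\}$ together with the limits at $x\to\pm\infty$ and, via L'H\^opital, at $x\to 1$; it then gets \eqref{eq:2c} from \eqref{eq:2d} with $\lambda=1$ (whence the constant $p+C$) and remarks that \eqref{eq:2dd} is proved like \eqref{eq:2d}. You instead work with the integral Taylor remainder $F_p(a,b)=p(p-1)(b-a)^2\int_0^1(1-s)|a+s(b-a)|^{p-2}ds$ (valid for all $p>1$ since $(|x|^p)'$ is absolutely continuous with locally integrable second derivative), which settles $p\ge 2$ at once, and you handle $1<p<2$ by an explicit sign-and-scale case analysis; \eqref{eq:2c} comes directly from the mean value theorem (with the better constant $p$), and \eqref{eq:2dd} from the mean value theorem for $p\ge2$, a normalized-ratio continuity argument (close in spirit to the paper's compactness step) in the same-sign case for $1<p<2$, and concavity of $t\mapsto t^{p-1}$ in the opposite-sign case; the interpolation in $\lambda$ via $|b-a|\le|a|+|b|$ is the same in both proofs. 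Your approach is more hands-on, self-contained (no appeal to \cite{MR3251822}) and produces explicit constants in most regimes, at the price of more bookkeeping. One small imprecision: in the same-sign case of \eqref{eq:2d} for $1<p<2$ the reduction ``WLOG $0\le a\le b$'' is not automatic, because $F_p(a,b)$ is not symmetric in $(a,b)$ even though the right-hand side is; the ordering $0\le b\le a$ needs a word, but it is disposed of by the very same two subcases (there $F_p(a,b)=p(p-1)\int_b^a s^{p-2}(s-b)\,ds$, and one splits according to $a-b\le b$ or $a-b>b$, using the crude bound $F_p(a,b)\le b^p+p\,a^p$ in the latter), or alternatively by symmetrizing, $F_p(a,b)\le p(b-a)\bigl(b^{\langle p-1\rangle}-a^{\langle p-1\rangle}\bigr)$, and invoking your \eqref{eq:2dd} with $\lambda=1$.
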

\begin{proof}
The inequality \eqref{eq:2d} with $\lambda=2$ follows from \cite[Lemma 6]{MR3251822}. In fact, we have
\begin{equation}\label{eq:2e}
|b|^p - |a|^p - pa^{\langle p - 1 \rangle} (b-a) \approx (b-a)^2 (|b|+|a|)^{p-2}, \quad a,b\in \R,
\end{equation}
see \cite[(11)]{MR3251822}.
Here is, however, another proof:
if $a \neq 0$, then we let $x=b/a$ and arrive at the following equivalent statement of \eqref{eq:2d}:
$$
0\le |x|^p-1 - p(x-1) \leq C |x-1|^\lambda (|x|+1)^{p-\lambda}, \quad x \in \R.
$$
Since $x\mapsto |x|^p$ is strictly convex, both sides are continuous and strictly positive on $\R\setminus\{1\}$. Let first $\lambda=2.$
By a compactness argument it is enough to notice that
$$
\lim_{x \to \pm \infty} \frac{|x|^p  -1-p(x-1)}{(x-1)^2(|x|+1)^{p-2}} = 1,
$$
and, by L'H\^{o}pital's rule,
\begin{equation*}
\lim_{x \to 1} \frac{|x|^p  -1 -p(x-1)}{(x-1)^2}
= \frac{p(p-1)}{2}.
\end{equation*}
 When $\lambda\in [0,2]$, we have that  $|b-a|^2(|b|+|a|)^{p-2}\leq |b-a|^\lambda (|b|+|a|)^{p-\lambda}$, which gives
 \eqref{eq:2d}.
\eqref{eq:2c} follows from   \eqref{eq:2d} with $\lambda=1$, and
\eqref{eq:2dd} can be proved like \eqref{eq:2d}.
\end{proof}

\subsection{Functions and kernels}
Let $g$ denote the Gaussian kernel
\begin{equation}\label{e:Gk}
g_t(x) = (4\pi t)^{-d/2} e^{-|x|^2/(4t)}\,, \quad t>0,\quad x\in \Rd\,.
\end{equation}
Let $\eta_t(s)\ge 0$ be the density function of the $\alpha/2$-stable subordinator at time~$t$, see, e.g., Bliedtner and Hansen \cite[Section V.3]{MR850715}, Schilling, Song and Vondra{\v{c}}ek \cite[Chapter 13]{MR2978140} or Sato \cite[Section 6.30]{MR1739520}.
In particular,
$\eta_t(s) = 0$ for $s\le 0$, and we have the following formula for the Laplace transform of $\eta_t$:
\[
 \int_0^\infty e^{-us} \eta_t(s)\,ds = e^{-tu^{\alpha/2}}, \quad u\geq 0, \quad t>0.
\]
We define, by \textit{subordination}, the convolution semigroup of functions (the $\alpha$-stable semigroup):
\begin{equation}\label{e:stable-pt}
 p_t(x) = \int_0^\infty g_s(x) \eta_t(s)\,ds \,, \quad t>0,\quad x\in \Rd.
\end{equation}
It is well known that the functions $p_t$ satisfy the following scaling property:
\begin{equation}
  \label{e:sca}
  p_t(x)=t^{-\frac{d}{\alpha}}p_1(t^{-\frac{1}{\alpha}}x)\, \quad t>0,\quad x\in \Rd\,.
\end{equation}
We consider the following transition probability density
\begin{equation}\label{e.dp}
p_t(x,y) = p_t(y-x), \quad t>0, \, x,y \in \Rd.
\end{equation}
Clearly, $p_t(x,y)$ is symmetric in the space variables: $p_t(x,y)=p_t(y,x)$.
It is well known that
\begin{equation}\label{e.mpa}
p_t(x,y) \leq c \min \left(t^{-d/\alpha}, t|x-y|^{-d-\alpha}\right), \quad t>0,\quad x, y \in \Rd,
\end{equation}
and so we can record the following convenient estimate
\begin{equation}\label{e.mp}
p_t(x,y)/t \leq c \nu(x,y), \quad t>0,\quad x, y \in \Rd,
\end{equation}
see, e.g., Bogdan, Grzywny and Ryznar \cite[Theorem 21]{MR3165234}. Also,
\begin{equation}\label{e.zp}
p_t(x,y)/t \rightarrow \nu(x,y) \mbox{ as } t \rightarrow 0^+,
\end{equation}
see, e.g., Cygan, Grzywny and Trojan \cite[Proof of Theorem 6]{MR3646773}. We will use \eqref{e.mp} and \eqref{e.zp} to support arguments based on the following general fact.
 \begin{lem}\label{l:ru}
If nonnegative functions satisfy
$f_n\le c f$ for $n\in \N$, and $f=\lim f_n$, then
$\lim \int f_n d\mu= \int f d\mu$ for any measure $\mu$.
\end{lem}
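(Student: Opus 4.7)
The statement is a mild reformulation of Lebesgue's dominated convergence theorem in which the dominating function is a multiple of the pointwise limit itself. My plan is therefore to split into two cases according to whether $\int f\,d\mu$ is finite or not, and in each case reduce to a classical convergence theorem.

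\textbf{Case 1: $\int f\,d\mu<\infty$.} Since $0\le f_n\le cf$ pointwise, the function $cf$ is an integrable majorant of the whole sequence $(f_n)$. As $f_n\to f$ pointwise, the dominated convergence theorem gives $\lim_n\int f_n\,d\mu=\int f\,d\mu$ directly.

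\textbf{Case 2: $\int f\,d\mu=\infty$.} Here Fatou's lemma suffices: since $f_n\ge 0$ and $f_n\to f$, we have
\begin{equation*}
\int f\,d\mu=\int\liminf_n f_n\,d\mu\le \liminf_n\int f_n\,d\mu,
\end{equation*}
so $\liminf_n\int f_n\,d\mu=\infty$. On the other hand $\int f_n\,d\mu\le c\int f\,d\mu=\infty$ is trivially true, and the sequence $\int f_n\,d\mu$ must tend to $\infty=\int f\,d\mu$.

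There is no real obstacle: the only thing to be careful about is not to presume that the dominator $cf$ is integrable (which is why the case split is necessary). If one prefers a single unified argument, one can instead apply Fatou's lemma separately to the nonnegative sequences $f_n$ and, provided $\int cf\,d\mu<\infty$, to $cf-f_n$; combining the two inequalities pins $\liminf_n\int f_n\,d\mu$ and $\limsup_n\int f_n\,d\mu$ to $\int f\,d\mu$, while the case $\int f\,d\mu=\infty$ is again handled by Fatou alone. Either formulation yields the claim.
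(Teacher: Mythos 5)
Your proof is correct and follows essentially the same route as the paper: dominated convergence (with majorant $cf$) when $\int f\,d\mu<\infty$, and Fatou's lemma when $\int f\,d\mu=\infty$. No issues.
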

\begin{proof}
If the integral $\int f d\mu$ is finite, then the dominated convergence theorem applies. Otherwise, by Fatou's lemma, $ \int f d\mu=\infty=\liminf \int f_n d\mu=\lim \int f_n d\mu$, too.
\end{proof}

For $\alpha < d$ and $\beta \in (0, d)$, we let
$$f_{\beta}(t) = c t^{(d-\alpha -\beta)/\alpha}_+ ,\quad t\in \R.$$
Here $c\in (0, \infty)$ is a normalizing constant so chosen that
\begin{equation}\label{e.dfb}
 \int_0^\infty f_{\beta}(t) p_t(x) dt =|x|^{-\beta}=h_\beta(x),  \quad x\in \Rd.
\end{equation}
For $\beta \in (0, d-\alpha)$ we let
\begin{equation}\label{e.dqb}
     q_\beta(x): = \frac{1}{h_\beta(x)}\int_0^{\infty }f_{\beta}'(t) p_t(x) dt,\quad x\in \Rd.
\end{equation}
We note that the choice of $c$ does not affect $q_\beta$.
By \cite[Section~4]{MR3460023},
\begin{equation}\label{e:qbeta}
  q_\beta(x) = \kappa_\beta |x|^{-\alpha }.
\end{equation}
By \cite[the proof of Proposition~5]{MR3460023}, the function $\beta \mapsto \kappa_\beta$ is increasing on $(0,(d-\alpha)/2]$ and decreasing on $[(d-\alpha)/2,d-\alpha)$. Furthermore, $\kappa_\beta = \kappa_{d-\alpha-\beta}$.

We denote, as usual, $P_t u(x)=\int_{\Rd} u(y) p_t(x,y) \, dy$ for $u:\Rd \to \R$ and $x \in \Rd$, if the integral is well defined.

Since $\int_\Rd p_t(x,y)dy=\int_\Rd p_t(x,y)dx=1$, by Schur's test, for every $p\in [1,\infty]$,
\begin{equation}\label{e.cLp}
\|P_tf\|_p\le \|f\|_p, \quad f\in L^p(\Rd).
\end{equation}
In fact, since $p_t(x,y)\le p_t(0,0)=t^{-d/\alpha}p_1(0,0)<\infty$, by Young inequality,
\begin{equation}\label{e.cLi}
\|P_tf\|_\infty \le c_t \|f\|_p, \quad f\in L^p(\Rd).
\end{equation}
By \cite[Eq. 7]{MR3460023}, for $0\leq \beta \leq d-\alpha$,
\begin{equation}\label{e:supermedian}
P_th_\beta \leq h_\beta.
\end{equation}
In this sense, $h_\beta$ is supermedian when $0\leq \beta \leq d-\alpha$.

\subsection{The domain of the form $\mathcal E_p$}
Recall that $p \in (1,\infty)$. As usual, $L^p(\Rd)$ is the collection of all  the real-valued functions on $\Rd$ such that $\int_{\Rd} |f(x)|^p dx < \infty$;  we identify functions $u,v \in L^p(\Rd)$ if $u=v$ a.e. on $\Rd$.  The dual space of $L^p(\Rd)$ is, of course, $L^q(\Rd)$, where $q=p/(p-1)$.  We also consider the canonical pairing $\langle u, v\rangle= \langle v, u \rangle = \int_{\Rd} u(x)v(x)dx$, $u \in L^p(\Rd), v \in L^q(\Rd)$.

Since for any $a,b\in \R$ we have $(b-a)(b^{\langle p-1\rangle}-a^{\langle p-1 \rangle})\geq 0$, the nonlinear form
\[\EEE_p{[u]} = \frac{1}{2} \int_{\mathbb R^d}\int_{\mathbb R^d} (u(x)-u(y))(u^{\langle p-1\rangle}(x)-u^{\langle p-1 \rangle}(y))\nu(x,y) \,dy \,dx\] is well-defined, perhaps infinite, for every $u:\Rd\to\R.$ We define its natural domain:
\begin{equation}\label{def:DE}
\mathcal D(\mathcal E_p)= \{u\in L^p(\Rd) : \mathcal E_p{[u]}<\infty\}.
\end{equation}
For $p=2$, as usual, we let
\begin{equation*}
\mathcal E{[u]}=\lim_{t\to 0}\frac{1}{t}\langle u-P_tu, u\rangle,
\end{equation*}
where $u\in L^2(\Rd)$ is arbitrary and the expression under the limit is decreasing in $t$ (see Hille and Phillips \cite[Section 22.3]{MR0423094} and \cite[Lemma 1.3.4]{MR2778606}).
We also let
\begin{equation*}
\mathcal D(\mathcal E)= \{u\in L^2(\Rd): {\EEE[u]<\infty}\}.
\end{equation*}
Note that the quantity  suggested by the definition of
$\EEE[u]$, \[\EEE^{(t)}(u,v): = \frac{1}{t} \langle u - P_t u, v \rangle, \quad t>0,\]
makes sense for all
 $u \in L^p(\Rd)$ and $v \in L^q(\Rd)$ because $P_tL^p(\Rd)\subset L^p(\Rd)$. In fact, $(P_t)_{t\ge 0}$ form a strongly continuous semigroup of contractions on $L^p(\Rd)$, see, e.g., Kwa\'{s}nicki \cite{MR3613319}, and we denote by $\mathcal D_p(\Delta^{\alpha/2})$ the domain of $\Delta^{\alpha/2}$ considered as the generator of this semigroup.
We can now characterize $\mathcal D(\EEE_p)$. We note that \eqref{e.cHkf} is  a variant of  \cite[Theorem 3.1]{MR1407327}, see also \cite[Remark (3)]{MR1407327}.
\begin{lem}
Let $p>1$. Then for every $u\in L^p(\mathbb R^d)$ we have
\begin{equation}\label{e:wpf}
\EEE_p[u]= \lim_{t\to 0} \EEE^{(t)}(u,u^{\langle p-1 \rangle}).
\end{equation}
Furthermore,
\begin{align}\label{e:dom_def-1}
 \mathcal D(\EEE_p) & = \{u\in L^p(\Rd): \sup_{t>0}\mathcal E^{(t)}(u,u^{\langle p-1 \rangle})<\infty\}\\
 \label{e:dom_def-2}
 &=\{u\in L^p(\Rd): \text{ finite } \lim_{t\to 0}\mathcal E^{(t)}(u,u^{\langle p-1 \rangle}) \text{ exists}\}.
 \end{align}
For arbitrary Borel measurable $u: \Rd \to \R$ we also have
\begin{equation}\label{e.cHkf}
\frac{4(p-1)}{p^2}\EEE[u^{\langle p/2\rangle}]\le \EEE_p[u]\le 2 \EEE[u^{\langle p/2\rangle}]
\end{equation}
and $\mathcal D(\EEE_p)=\mathcal D(\EEE)^{\langle 2/p\rangle}:=\{v^{\langle 2/p\rangle}: v\in \mathcal D(\EEE)\}$.
Finally, $\mathcal D_p(\Delta^{\alpha/2})\subset  \mathcal D(\EEE_p)$ and
\begin{equation}\label{e.pfag}
\EEE_p[u]=-\langle\Delta^{\alpha/2}u, u^{\langle p-1\rangle}\rangle, \quad u\in \mathcal D_p(\Delta^{\alpha/2}).
\end{equation}
\end{lem}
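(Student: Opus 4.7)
The proof rests on interpreting $\EEE^{(t)}(u, u^{\langle p-1\rangle})$ as a symmetric double integral against the kernel $p_t(x,y)/t$ and then sending $t\to 0^+$ via the uniform comparison $p_t(x,y)/t\le c\,\nu(x,y)$ from \eqref{e.mp} and the pointwise convergence $p_t(x,y)/t\to \nu(x,y)$ from \eqref{e.zp}. Lemma~\ref{l:ru} packages these two inputs into convergence of the relevant integrals.

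For fixed $t>0$ and $u\in L^p(\Rd)$, I first justify Fubini to unfold
$$\langle u - P_tu,\, u^{\langle p-1\rangle}\rangle \;=\; \int_{\Rd}\!\int_{\Rd}(u(x)-u(y))\,u^{\langle p-1\rangle}(x)\,p_t(x,y)\,dy\,dx.$$
Absolute integrability follows from $|u(x)-u(y)||u^{\langle p-1\rangle}(x)|\le (|u(x)|+|u(y)|)|u(x)|^{p-1}$, stochastic completeness $\int p_t(x,y)\,dy=\int p_t(x,y)\,dx=1$, and H\"older applied to $\int|u(y)|P_t(|u|^{p-1})(y)\,dy$, which is bounded by $\|u\|_p^p$ in view of \eqref{e.cLp}. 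The symmetry $p_t(x,y)=p_t(y,x)$ then gives, after renaming and averaging,
$$\EEE^{(t)}(u, u^{\langle p-1\rangle}) \;=\; \frac{1}{2}\int_{\Rd}\!\int_{\Rd}(u(x)-u(y))(u^{\langle p-1\rangle}(x)-u^{\langle p-1\rangle}(y))\,\frac{p_t(x,y)}{t}\,dy\,dx,$$
the integrand being \emph{nonnegative} because $a\mapsto a^{\langle p-1\rangle}$ is increasing.

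With this setup, applying Lemma~\ref{l:ru} along any sequence $t_n\to 0^+$ yields \eqref{e:wpf}. The same domination \eqref{e.mp} gives $\sup_{t>0}\EEE^{(t)}(u, u^{\langle p-1\rangle})\le c\,\EEE_p[u]$, while \eqref{e:wpf} supplies the reverse estimate $\EEE_p[u]\le \sup_{t>0}\EEE^{(t)}(u, u^{\langle p-1\rangle})$, so the characterizations \eqref{e:dom_def-1} and \eqref{e:dom_def-2} of $\mathcal D(\EEE_p)$ drop out immediately. The sandwich \eqref{e.cHkf} follows by applying \eqref{e.cHk} pointwise with $a=u(y)$, $b=u(x)$, multiplying by $\nu(x,y)$ and integrating. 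Setting $v=u^{\langle p/2\rangle}$, the identity $\|v\|_2^2=\|u\|_p^p$ gives $u\in L^p(\Rd)$ iff $v\in L^2(\Rd)$, and \eqref{e.cHkf} forces $\EEE_p[u]<\infty$ iff $\EEE[v]<\infty$, yielding $\mathcal D(\EEE_p)=\mathcal D(\EEE)^{\langle 2/p\rangle}$.

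Finally, for $u\in\mathcal D_p(\Delta^{\alpha/2})$, strong convergence $(u-P_tu)/t\to -\Delta^{\alpha/2}u$ in $L^p(\Rd)$ combined with $u^{\langle p-1\rangle}\in L^{p/(p-1)}(\Rd)$ gives by H\"older duality
$$\lim_{t\to 0}\EEE^{(t)}(u, u^{\langle p-1\rangle}) \;=\; \langle -\Delta^{\alpha/2}u,\, u^{\langle p-1\rangle}\rangle;$$
by \eqref{e:wpf} this limit equals $\EEE_p[u]$, which is therefore finite, proving both the inclusion $\mathcal D_p(\Delta^{\alpha/2})\subset \mathcal D(\EEE_p)$ and the identity \eqref{e.pfag}. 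The principal technical obstacle is the pre-symmetrization Fubini step, whose integrand is not manifestly nonnegative and requires the H\"older/boundedness bookkeeping above; once it is in hand, Lemma~\ref{l:ru} runs cleanly and the remaining claims are immediate consequences.
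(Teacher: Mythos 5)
Your proposal is correct and follows essentially the same route as the paper: symmetrize $\EEE^{(t)}(u,u^{\langle p-1\rangle})$ into a double integral against $p_t(x,y)/t$, pass to the limit via Lemma~\ref{l:ru} with \eqref{e.mp} and \eqref{e.zp}, deduce the domain characterizations from the resulting domination and limit identity, get \eqref{e.cHkf} pointwise from \eqref{e.cHk}, and obtain \eqref{e.pfag} from the strong $L^p$-convergence of $(u-P_tu)/t$ paired with $u^{\langle p-1\rangle}\in L^{p/(p-1)}$. Your explicit H\"older/Fubini justification of the pre-symmetrization step is a welcome elaboration of what the paper leaves implicit, but it is not a different method.
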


\begin{proof}
Since $\int_{\Rd} p_t(x,y) dy = 1$ for all $t>0, \ x \in \Rd$, the symmetry of $p_t(x,y)$
yields
\begin{align*}
\EEE^{(t)}(u, u^{\langle p - 1 \rangle} ) &= \frac{1}{t} \int_{\Rd} \int_{\Rd} p_t(x,y) (u(x)-u(y)) dy \, u(x)^{\langle p - 1 \rangle}  \, dx \\
&= \frac{1}{t} \int_{\Rd} \int_{\Rd} p_t(y,x) (u(y)-u(x)) dx \, u(y)^{\langle p - 1 \rangle}  \,dy \\
&= \frac{1}{2t} \int_{\Rd} \int_{\Rd} p_t(x,y) (u(x)-u(y)) (u(x)^{\langle p - 1 \rangle}  - u(y)^{\langle p - 1 \rangle} ) \,dy \,dx
\end{align*}
for every $u\in L^p(\mathbb R^d)$.
By Lemma~\ref{l:ru}, \eqref{e.mp} and \eqref{e.zp}
we get \eqref{e:wpf}. Since the limit  in \eqref{e:wpf} exists for all $u\in L^p(\mathbb R^d)$, we obtain \eqref{e:dom_def-1} and \eqref{e:dom_def-2}, using dominated convergence.
From \eqref{e.cHk} we obtain \eqref{e.cHkf}. Of course, if $v\in \mathcal D(\EEE)$ and $u=v^{\langle 2/p\rangle}$, then by \eqref{e.cHkf}, $\EEE_p[u]\le 2 \EEE[(v^{\langle 2/p\rangle})^{\langle p/2\rangle}]=2\EEE[v]<\infty$, so $u\in \mathcal D(\EEE_p)$. Conversely, if $u\in \mathcal D(\EEE_p)$ and we define $v=u^{\langle p/2\rangle}$, then by \eqref{e.cHkf}, $\EEE[v]\le p^2(4(p-1))^{-1} \EEE_p[u]<\infty$, so $v\in \mathcal D(\EEE)$ -- and clearly $u=v^{\langle 2/p\rangle}$.

Let $u \in \mathcal D_p(\Delta^{\alpha/2})$. Consider again $\frac{1}{t} \langle u - P_t u,u^{\langle p-1\rangle} \rangle$ as $t\to 0^+$. By the definition of the semigroup generator \cite{MR3613319}, we see that
$(P_tu-u)/t$ converges to $\Delta^{\alpha/2}u$ in $L^p(\Rd)$. Since $u^{\langle p-1\rangle}\in L^{p/(p-1)}(\Rd)$, the expression $\mathcal E^{(t)}(u,u^{\langle p-1\rangle})$ tends to (finite) $-\langle\Delta^{\alpha/2}u, u^{\langle p-1\rangle}\rangle$. On the other hand, it converges to $\EEE_p[u]$ by \eqref{e:wpf}.
\end{proof}

\section{Hardy identity and inequality}
\label{sec:Hi}
\begin{proof}[Proof of Theorem~\ref{thm:Theorem1}] For $\beta$ as in the assumptions of the theorem, we write $h=h_\beta$, $f=f_\beta$, etc. Take $u\in L^p(\Rd)$ and
let $v=u/h$, so that $vh=u$ a.e. The factorization $vh=u$ is the essence of Doob's conditioning, which inspired the calculations in \cite[Theorem 2]{MR3460023} and in what follows. Let $t>0$. Of course,
$vh \in L^p(\Rd)$, and by \eqref{e:supermedian}, $vP_th \in L^p(\Rd)$. Consider \eqref{e:wpf}. We have
\begin{align*}
\EEE^{(t)}(vh, (vh)^{\langle p - 1 \rangle} ) &= \frac{1}{t} \langle vh, (vh)^{\langle p - 1 \rangle}  \rangle
 - \frac{1}{t}\langle P_t(vh), (vh)^{\langle p - 1 \rangle}  \rangle
\\
& =  \frac{p-1}{p} \langle v \frac{h-P_th}{t}, (vh)^{\langle p - 1 \rangle}  \rangle  + \frac{p-1}{p} \langle v\frac{P_th}{t} , (vh)^{\langle p - 1 \rangle}  \rangle\\
&\quad+ \frac{1}{p} \langle v^{\langle p-1\rangle} \frac{h^{p-1}-P_th^{p-1}}{t}, vh \rangle+ \frac{1}{p} \langle v^{\langle p - 1 \rangle} \frac{ P_th^{p-1}}{t}, vh\rangle \\
&\quad- \langle \frac{ P_t(vh)}{t}, (vh)^{\langle p - 1 \rangle}  \rangle = I_t^{(1)} + I_t^{(2)} + J_t,
\end{align*}
where
\begin{align*}
I_t^{(1)}&:=  \frac{p-1}{p} \langle v \frac{h-P_th}{t}, (vh)^{\langle p - 1 \rangle}  \rangle=\frac{p-1}{p} \int_{\Rd}  |v(x)|^p h(x)^{p-1} \frac{(h-P_th)(x)}{t}\,dx,\\
I_t^{(2)}&:=  \frac{1}{p} \langle v^{\langle p-1\rangle} \frac{h^{p-1}-P_th^{p-1}}{t}, vh \rangle
=\frac{1}{p} \int_{\Rd}  |v(x)|^p h(x) \frac{(h^{p-1}-P_th^{p-1})(x)}{t}\,dx, \\
J_t&:=\frac{1}{p} \langle v^{\langle p - 1 \rangle} \frac{ P_th^{p-1}}{t}, vh \rangle + \frac{p-1}{p} \langle v\frac{P_th}{t} , (vh)^{\langle p - 1 \rangle}  \rangle - \langle \frac{ P_t(vh)}{t}, (vh)^{\langle p - 1 \rangle}  \rangle.
\end{align*}
By the symmetry of $p_t(x,y)$ and the definition of $F_p$,
\begin{align*}
J_t &= \frac{1}{p} \int_{\Rd} \int_{\Rd} |v(x)|^p h(y)^{p-1}h(x) \, \frac{p_t(x,y)}{t} \,dy \,dx \\
&\quad + \frac{p-1}{p} \int_{\Rd} \int_{\Rd} |v(x)|^p h(x)^{p-1} h(y)  \, \frac{p_t(x,y)}{t} \,dy \,dx \\
&\quad - \int_{\Rd} \int_{\Rd} v(x)^{\langle p - 1 \rangle}  v(y) h(x)^{p-1} h(y)  \, \frac{p_t(x,y)}{t} \,dy \,dx \\
&= \frac{1}{p}\int_{\Rd} \int_{\Rd} \left[|v(y)|^p - |v(x)|^p -p v(x)^{\langle p - 1 \rangle}  \left(v(y) - v(x) \right) \right] h(x)^{p-1}h(y) \, \frac{p_t(x,y)}{t} \, dy \,dx \\
&=\frac{1}{p}\int_{\Rd} \int_{\Rd} F_p\left({v(x)}, {v(y)} \right) h(x)^{p-1}h(y) \frac{p_t(x,y)}{t} \, dy \, dx.
\end{align*}
By Lemma~\ref{l:ru}, \eqref{e.mp} and \eqref{e.zp},
\begin{align}\label{e.f1}
\lim_{t\to 0}J_t &=\frac{1}{p}\int_{\Rd} \int_{\Rd} F_p\left({v(x)}, {v(y)} \right) h(x)^{p-1}h(y) \nu(x,y) \, dy \, dx \nonumber \\
&=\frac{1}{p}\int_{\Rd} \int_{\Rd} F_p\left(\frac{u(x)}{h(x)}, \frac{u(y)}{h(y)} \right) h(x)^{p-1}h(y) \nu(x,y) \, dy \, dx.
\end{align}
We next consider $I_t^{(1)}$ and
recall that $h(x)=\int_0^\infty f(s) p_s(x)\,ds$, so for $x\neq 0$,
\begin{align*}
 (h-P_th)(x) &= \int_0^\infty f(s) p_s(x) \,ds - \int_0^\infty f(s) p_{s+t}(x) \,ds \\
&=\int_0^\infty [f(s) - f(s-t)] p_s(x) \,ds.
\end{align*}
Thus,
\begin{align*}
I_t^{(1)}&=\frac{p-1}{p} \int_{\Rd} |v(x)|^p h(x)^{p-1} \int_0^\infty \frac{1}{t}[f(s) - f(s-t)]\ p_s(x) \,ds\, dx.
\end{align*}
We have $0\leq f(s)-f(s-t)\leq C t f'(s)$ for $s,t>0$.
By Lemma~\ref{l:ru} and \eqref{e:qbeta} we get
\begin{align}
\underset{t \to 0} {\lim} \, I_t^{(1)}&= \frac{p-1}{p}  \int_{\Rd}  \, |v(x)|^p h(x)^{p-1} \int_0^{\infty} f'(s)p_s(x)\, ds\, dx\nonumber\\
&= \frac{p-1}{p}  \int_{\Rd} |v(x)|^p h(x)^{p}q(x) \, dx \label{e.f2}
=
\frac{(p-1)\kappa_{\beta}}{p} \int_{\Rd} \frac{|u(x)|^p}{|x|^{\alpha}}\, dx.
\end{align}
We can treat $I_t^{(2)}$ analogously. The analogy has several layers. We let $p'=p/(p-1)$. We have $p=p'/(p'-1)$ and $(p'-1)(p-1)=1$.
By considering
\eqref{e:hbeta} and \eqref{e.dfb} we get
\begin{align*}
I_t^{(2)} &=\frac{p'-1}{p'} \int_{\Rd}  |v(x)^{\langle 1/(p'-1) \rangle}|^{p'} h^{p'-1}_{(p-1)\beta}(x) \frac{h_{(p-1)\beta}(x)-P_th_{(p-1)\beta}(x)}{t} \, dx.
\end{align*}
We note that $0\le (p-1)\beta \leq d-\alpha$ and by the case of $I^{(1)}_t$ we get
\begin{align}\label{e.f3}
\underset{t \to 0} {\lim} \,I_t^{(2)}&=\frac{\kappa_{(p-1)\beta}}{p} \int_{\Rd} \frac{|u(x)|^p}{|x|^{\alpha}}\, dx.
\end{align}
By \eqref{e:wpf}, \eqref{e.f1}, \eqref{e.f2} and \eqref{e.f3} we get
\eqref{e:main}.
\end{proof}
For $\beta = (d-\alpha)/p$, we have $\kappa_{(p-1)\beta} = \kappa_{d-\alpha-\beta} = \kappa_{\beta}$, which yields \eqref{e:ip}. As aforementioned in Introduction, our constant is better than that in \eqref{e:subgoal}.
\begin{lem}\label{thm:lem7}
If $p\neq 2$, then
$
\kappa_{(d-\alpha)/p} > 4(p-1)p^{-2}\kappa_{(d-\alpha)/2}
$.
\end{lem}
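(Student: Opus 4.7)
The plan is to rewrite the inequality as a strict log-convexity statement for a single gamma quotient. Set $a=\alpha/2$, $x=\beta/2$ and $y=(d-\beta-\alpha)/2$, so that $x+y=(d-\alpha)/2$ does not depend on $\beta$ and \eqref{e.dkb} becomes
\[
\kappa_\beta=2^\alpha\,\frac{\Gamma(x+a)\Gamma(y+a)}{\Gamma(x)\Gamma(y)}=2^\alpha\,xy\,F(x)\,F(y),
\]
where $F(t):=\Gamma(t+a)/\Gamma(t+1)$ and we used $\Gamma(t+1)=t\Gamma(t)$. For $\beta=(d-\alpha)/p$ put $x_p=(d-\alpha)/(2p)$ and $y_p=(d-\alpha)(p-1)/(2p)$, and for $\beta_0=(d-\alpha)/2$ put $x_0=y_0=(d-\alpha)/4$; note that both pairs lie in $(0,\infty)$ and share the same sum, $x_p+y_p=2x_0=(d-\alpha)/2$.

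Next I would isolate the algebraic factor. A direct calculation gives
\[
\frac{x_p y_p}{x_0^{2}}=\frac{(d-\alpha)^2(p-1)/(4p^2)}{(d-\alpha)^2/16}=\frac{4(p-1)}{p^{2}},
\]
so that
\[
\frac{\kappa_{(d-\alpha)/p}}{\kappa_{(d-\alpha)/2}}=\frac{4(p-1)}{p^{2}}\cdot\frac{F(x_p)F(y_p)}{F(x_0)^{2}}.
\]
The lemma therefore reduces to the strict inequality $F(x_p)F(y_p)>F\bigl((x_p+y_p)/2\bigr)^{2}$, which, given that the two pairs share the same midpoint, is exactly strict log-convexity of $F$ at the midpoint of $[x_p,y_p]$; here $x_p\neq y_p$ precisely because $p\neq 2$.

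To verify strict log-convexity of $F$ on $(0,\infty)$ I differentiate twice, obtaining
\[
(\log F)''(t)=\psi'(t+a)-\psi'(t+1),
\]
where $\psi=\Gamma'/\Gamma$ is the digamma function. Since $a=\alpha/2\in(0,1)$ and the trigamma function $\psi'(t)=\sum_{k\ge 0}(t+k)^{-2}$ is strictly decreasing on $(0,\infty)$, we have $(\log F)''(t)>0$. Applying strict convexity of $\log F$ at the midpoint of $[x_p,y_p]$ yields
\[
\tfrac12\log F(x_p)+\tfrac12\log F(y_p)>\log F\!\left(\tfrac{x_p+y_p}{2}\right),
\]
which is the desired strict inequality.

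I expect the main obstacle to be the initial algebraic reformulation: recognizing the clean factorization $\kappa_\beta=2^\alpha xy\,F(x)F(y)$ and noticing that the numerical factor $4(p-1)/p^2$ appearing in the lemma is precisely the ratio $x_p y_p/x_0^2$ for pairs with the same sum. Once this identification is made, the analytic content collapses to the standard monotonicity of the trigamma function on the positive half-line, and the proof is immediate.
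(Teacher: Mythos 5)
Your proof is correct, and it takes a genuinely different route from the paper. The paper substitutes $t=(d-\alpha)/(2p)$, writes the claimed inequality as $r(t)>\kappa_{(d-\alpha)/2}\,s(t)$ on $(0,\frac{d-\alpha}{4})$ with $r(t)=\kappa_{2t}$ and $s(t)=\frac{16}{(d-\alpha)^2}t\left(\frac{d-\alpha}{2}-t\right)$, and then proves that the logarithmic difference is strictly decreasing by an explicit digamma-series computation (term-by-term sign analysis of the resulting series), concluding from the endpoint value at $t=\frac{d-\alpha}{4}$; the range $p\in(1,2)$ is then handled by a separate symmetry/duality step using $\kappa_\beta=\kappa_{d-\alpha-\beta}$ and $4(p-1)p^{-2}=4(q-1)q^{-2}$. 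You instead factor $\kappa_\beta=2^\alpha xy\,F(x)F(y)$ with $F(t)=\Gamma(t+a)/\Gamma(t+1)$, $a=\alpha/2\in(0,1)$, observe that the constant $4(p-1)p^{-2}$ is exactly the rational ratio $x_py_p/x_0^2$ for pairs with fixed sum $(d-\alpha)/2$, and reduce the lemma to strict midpoint log-convexity of $F$, which follows from $(\log F)''(t)=\psi'(t+a)-\psi'(t+1)>0$, i.e.\ from monotonicity of the trigamma function. Both arguments ultimately rest on the same special-function input (the series for $\psi'$), but your packaging is cleaner: it explains structurally why the factor $4(p-1)/p^2$ appears (it is the elementary part $xy$ of $\kappa_\beta$ along the constraint $x+y=\mathrm{const}$, while the strict gain comes from the log-convex Gamma ratio), it treats $p>2$ and $p\in(1,2)$ simultaneously since the argument is symmetric in $(x_p,y_p)$, and it avoids the term-by-term estimate of the derivative series. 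What the paper's monotonicity argument buys in exchange is slightly more information, namely that $\kappa_{2t}/s(t)$ is monotone on the whole interval rather than just exceeding its endpoint value. One cosmetic remark: your $F$ collides with the paper's use of $F$ both for the Bregman divergence $F_p$ and for the auxiliary function in its own proof of this lemma, so rename it if this is to be inserted into the text; also note explicitly that $x_p,y_p>0$ and $\kappa_{(d-\alpha)/2}>0$ justify the division, which you essentially do.
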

\begin{proof}
Let $p>2$.
Denote
$$
r(t)=\kappa_{2t}=\frac{{2^\alpha}\Gamma(\frac{\alpha}{2}+t) \Gamma(\frac{d}{2}-t)}{\Gamma(t)\Gamma((d-\alpha)/2 -t)}, \quad t\in(0,\tfrac{d-\alpha}{4}).
$$
Put $t=\frac{d-\alpha}{2p}$. Then $p=\frac{d-\alpha}{2t}$ and
$$
\frac{4(p-1)}{p^2} = \frac{16t^2}{(d-\alpha)^2} \left(\frac{d-\alpha}{2t}-1\right) = \frac{16}{(d-\alpha)^2} t \left((d-\alpha)/2 -t \right) =: s(t).
$$
We only need to verify that
$$
r(t) > \kappa_{(d-\alpha)/2} s(t), \quad t \in (0,\tfrac{d-\alpha}{4}).
$$
Notice that $r(0^+) =0$ and  $s(0^+)=0$ and $r(\frac{d-\alpha}{4})=\kappa_{(d-\alpha)/2} = \kappa_{(d-\alpha)/2} s(\frac{d-\alpha}{4})$.
Let
$$
F(t) = \ln r(t) - \ln \kappa_{(d-\alpha)/2} s(t).
$$
It suffices to show that $F(t)>0$ for $t \in (0,\frac{d-\alpha}{4})$.
By \cite[Proof of Proposition 5]{MR3460023},
\begin{align*}
F'(t) &= \sum_{k=0}^{\infty} \left(\frac{1}{\frac{d}{2}+k-t} - \frac{1}{t+\frac{\alpha}{2}+k} - \frac{1}{(d-\alpha)/2+k-t} + \frac{1}{t+k}\right) - \left(\frac{1}{t} - \frac{1}{(d-\alpha)/2 -t} \right),
\end{align*}
hence
\begin{align*}
F'(t) &= \sum_{k=0}^{\infty} \left(\frac{1}{\frac{d}{2}+k-t} - \frac{1}{t+\frac{\alpha}{2}+k} - \frac{1}{(d-\alpha)/2+1+k-t} + \frac{1}{t+1+k}\right)\\
&= \sum_{k=0}^{\infty} \left(\frac{\frac{\alpha}{2}-1}{(t+1+k)(t+\frac{\alpha}{2}+k)} - \frac{\frac{\alpha}{2}-1}{(\frac{d}{2}+k-t)(\frac{d-\alpha+2}{2}+k-t)} \right) \\
&= \frac{\alpha-2}{2} \sum_{k=0}^{\infty} \frac{k(d-\alpha)+(d-\alpha)(d+2)/4 - t(d+2+4k)}{(t+1+k)(t+\frac{\alpha}{2} +k)(\frac{d}{2}+k-t) (\frac{d-\alpha+2}{2}+k-t)}.
\end{align*}
The numerator of every term in the last series is decreasing in $t$ and equal to $0$ for $t=\frac{d-\alpha}{4}$, so 
$F'(t) <0$ for $t\in(0,\frac{d-\alpha}{4})$. Since $F(\frac{d-\alpha}{4})=0$,  $F(t)>0$ for $t\in(0,\frac{d-\alpha}{4})$.
We now consider the case $p \in(1,2)$. Let $q= \frac{p}{p-1}$. Of course, $q \in (2, \infty)$ and $\frac{1}{p}+\frac{1}{q}=1$.
We have
$ \kappa_{(d-\alpha)/q}=  \kappa_{(d-\alpha)/p}$, since $(d-\alpha)/q+(d-\alpha)/p=d-\alpha$ and
$\kappa_\beta$ is symmetric with respect to $(d-\alpha)/2$.
Furthermore, $4(p-1)p^{-2}=4/(pq)=4(q-1)q^{-2}$.
\end{proof}

\section{Optimality}\label{s:opt}

\noindent
In this section we prove Theorem~\ref{thm:Theorem2}.
The argument is rather technical due to integrability issues with the intended test function
for \eqref{e.Hp}.
We start with three auxiliary lemmas.
The following decomposition of $\mathcal{E}_p{[u]}$ is different and simpler than \eqref{e:main} but we should note that the second term needs not be nonnegative or finite.
\begin{lem}\label{thm:lem4}
Under the assumptions of Theorem~\ref{thm:Theorem1} we have
\begin{align*}
&\mathcal{E}_p{[u]} = \kappa_{\beta} \int_{\Rd} \frac{\left|u(x)\right|^p}{\left|x\right|^{\alpha}} dx \\
&+ \underset{t\to 0}\lim \ \frac{1}{2}\int_{\Rd} \int_{\Rd} \left( \frac{u(x)}{h(x)}-\frac{u(y)}{h(y)} \right) \left( \frac{u(x)^{\langle p - 1 \rangle} }{h(x)}-\frac{u(y)^{\langle p - 1 \rangle} }{h(y)} \right) h(x) h(y) \frac{p_t(x,y)}{t} \,dy \,dx,
\end{align*}
if additionally $u \in L^p(\Rd, |x|^{-\alpha} dx)$.
\end{lem}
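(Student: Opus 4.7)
The plan is to work at the pre-limit level $\mathcal E_p^{(t)}$, isolate the two-point (Doob-type) quantity that appears in the statement, and show that the discrepancy reduces, via the symmetry of $p_t(x,y)$, to a single one-variable integral already analyzed in the proof of Theorem~\ref{thm:Theorem1}. Recall that $\EEE_p^{(t)}[u]=\tfrac12\!\int\!\!\int(u(x)-u(y))(u(x)^{\langle p-1\rangle}-u(y)^{\langle p-1\rangle})\,p_t(x,y)/t\,dy\,dx\to \EEE_p[u]$ by \eqref{e:wpf}, so the work boils down to an algebraic rearrangement plus a known limit.

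Setting $v:=u/h$ we have $u=vh$ and (since $h>0$) $u^{\langle p-1\rangle}=v^{\langle p-1\rangle}h^{p-1}$, so $u^{\langle p-1\rangle}/h=v^{\langle p-1\rangle}h^{p-2}$. The central pointwise identity I would establish is
\begin{align*}
&(u(x)-u(y))\bigl(u(x)^{\langle p-1\rangle}-u(y)^{\langle p-1\rangle}\bigr)\\
&\quad=\Bigl(\tfrac{u(x)}{h(x)}-\tfrac{u(y)}{h(y)}\Bigr)\Bigl(\tfrac{u(x)^{\langle p-1\rangle}}{h(x)}-\tfrac{u(y)^{\langle p-1\rangle}}{h(y)}\Bigr)h(x)h(y)\\
&\qquad+\bigl(|v(x)|^p h(x)^{p-1}-|v(y)|^p h(y)^{p-1}\bigr)\bigl(h(x)-h(y)\bigr).
\end{align*}
It is verified by direct expansion: the mixed terms $v(x)v(y)^{\langle p-1\rangle}h(x)h(y)^{p-1}$ and $v(y)v(x)^{\langle p-1\rangle}h(y)h(x)^{p-1}$ appear identically on both sides, and summing the two right-hand terms produces the remaining diagonal contributions $|v(x)|^p h(x)^p$ and $|v(y)|^p h(y)^p$.

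Next, I would multiply the identity by $\tfrac12\,p_t(x,y)/t$ and integrate over $\Rd\times\Rd$. The symmetry of $p_t(x,y)$ collapses the last summand to the one-variable expression
\[
\int_{\Rd}|v(x)|^p h(x)^{p-1}\,\frac{h(x)-P_t h(x)}{t}\,dx,
\]
which is exactly $\tfrac{p}{p-1}I_t^{(1)}$ from the proof of Theorem~\ref{thm:Theorem1}; hence it converges as $t\to 0^+$ to $\kappa_\beta\!\int|u|^p|x|^{-\alpha}dx$, a finite number by the additional hypothesis $u\in L^p(\Rd,|x|^{-\alpha}dx)$. Since $\mathcal E_p^{(t)}[u]\to\mathcal E_p[u]$, rearranging the integrated identity gives existence of the remaining limit and the announced decomposition.

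The main obstacle is that, for $p\neq 2$, the integrand in the retained two-point term is \emph{not} pointwise nonnegative (the factors $h(x)^{p-2}$ and $h(y)^{p-2}$ no longer symmetrize into a nonnegative form), so one cannot take $t\to 0^+$ inside the double integral by monotone or dominated convergence alone. This is why the lemma asserts only the existence of the limit, and why the added integrability hypothesis is imposed: it guarantees that the subtracted $\kappa_\beta$-term is finite so that the two converging pieces may be legitimately separated.
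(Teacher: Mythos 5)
Your proposal is correct and follows essentially the same route as the paper: the paper also writes $\mathcal E^{(t)}(vh,(vh)^{\langle p-1\rangle})=I_t+J_t$ with $I_t=\langle v\frac{h-P_th}{t},(vh)^{\langle p-1\rangle}\rangle$ (your one-variable term, i.e.\ $\tfrac{p}{p-1}I_t^{(1)}$ from the proof of Theorem~\ref{thm:Theorem1}) and then symmetrizes $J_t$ using $p_t(x,y)=p_t(y,x)$ to obtain the stated two-point integrand, before separating the limits thanks to the finiteness of $\kappa_\beta\int|u|^p|x|^{-\alpha}dx$. The only difference is cosmetic: you verify the decomposition as a pointwise algebraic identity and then symmetrize, while the paper decomposes at the level of the pairing and then symmetrizes $J_t$.
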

\begin{proof}
Thus, $u \in L^p(\Rd, (1+ |x|^{-\alpha}) dx)$. Let $v=u/h$. By \eqref{e:wpf},
$$\mathcal{E}_p{[u]} = \lim_{t\to 0^+} \EEE^{(t)}(vh,(vh)^{\langle p - 1 \rangle} ).$$
By the proof of Theorem~\ref{thm:Theorem1}, $vP_th \in L^p(\Rd)$. We have
\begin{align*}
\EEE^{(t)}(vh, (vh)^{\langle p - 1 \rangle} ) &= \langle v \frac{h-P_th}{t}, (vh)^{\langle p - 1 \rangle}  \rangle +
   \langle  \frac{vP_th-P_t(vh)}{t} , (vh)^{\langle p - 1 \rangle}  \rangle =: I_t + J_t.
\end{align*}
By \eqref{e.f2} and the assumption $u \in L^p(\Rd, |x|^{-\alpha} dx)$
\begin{align*}
\underset{t \to 0}{\lim} \, I_t = \kappa_{\beta} \int_{\Rd} \frac{|v(x)h(x)|^p}{|x|^{\alpha}} \,dx<\infty.
\end{align*}
Symmetrizing the integrand in $J_t$ we get
\begin{align*}
& J_t  =
 \int_{\Rd} \int_{\Rd} \left(v(x)-v(y)\right) h(y) v(x)^{\langle p - 1 \rangle}  h(x)^{p-1} \, \frac{p_t(x,y)}{t}\, dy \, dx \\
&=\int_{\Rd} \int_{\Rd} \!\!\!\left(v(x)-v(y)\right) \left( v(x)^{\langle p - 1 \rangle}  h(x)^{p-1}h(y) - v(y)^{\langle p - 1 \rangle} h(y)^{p-1}h(x) \right) \frac{p_t(x,y)}{2t} dy dx.
\end{align*}
Since $u=vh$, the result follows. \end{proof}

Here is an informal idea of the proof of Theorem~\ref{thm:Theorem2}.
Let
\begin{equation}\label{e.mi2}
u(x)=\left|x \right|^{-\beta/(p-1)} \wedge \left|x \right|^{-\beta}, \quad x\in \Rd.
\end{equation}
Since $h=h_\beta$, the integrand under the limit in Lemma~\ref{thm:lem4} is zero if $|x|,|y|\le 1$ and if $|x|,|y|\ge 1$, and it has negative sign otherwise.
This reverses the Hardy inequality, but, unfortunately we need to face
the aforementioned integrability issues.
The next result gives a preparation.
For $r>0$ we let $B(0,r) = \{x \in \Rd: |x|<r\}$. Also, we let $B_1=B(0,1)$ and $B_1^c=\Rd\setminus B_1$.
\begin{lem}\label{thm:lem5}
If $p>2$ and $\mu>\beta\vee (d-\alpha)/p$, then
\begin{align*}
\int_{B_1^c}\int_{B_1^c}\left| |x|^{\beta-\mu} - |y|^{\beta-\mu} \right| \left||x|^{\beta-(p-1)\mu}-|y|^{\beta-(p-1)\mu}\right|&|x|^{-\beta}|y|^{-\beta} \nu(x,y) \, dy \, dx
<\infty.
\end{align*}
\end{lem}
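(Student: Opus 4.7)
The plan is to exploit the radial character of the integrand. By the symmetry $(x,y)\mapsto(y,x)$ it suffices (up to a factor of $2$) to integrate over $\{1\le|y|\le|x|\}$, which I split into a near-diagonal region $R_1=\{|y|\le|x|\le 2|y|\}$ and a far region $R_2=\{|x|\ge 2|y|\}$. Setting $a_1:=\mu-\beta$ and $a_2:=(p-1)\mu-\beta$, both are strictly positive: $a_1>0$ by $\mu>\beta$, and $a_2>0$ from $p>2$ together with $\mu>\beta$, which give $(p-1)\mu>\mu>\beta$. Then the two absolute values in the integrand rewrite as $|y|^{-a_1}-|x|^{-a_1}\ge 0$ and $|y|^{-a_2}-|x|^{-a_2}\ge 0$.

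On $R_1$ the mean value theorem applied to $t\mapsto t^{-a_i}$ on $[|y|,|x|]$ gives
\begin{equation*}
0\le|y|^{-a_i}-|x|^{-a_i}\le a_i|y|^{-a_i-1}(|x|-|y|)\le a_i|y|^{-a_i-1}|x-y|,\qquad i=1,2.
\end{equation*}
Combined with $|x|^{-\beta}|y|^{-\beta}\le|y|^{-2\beta}$ (since $|x|\ge|y|$ and $\beta\ge 0$) and $\nu(x,y)\le C|x-y|^{-d-\alpha}$, the integrand on $R_1$ is controlled by $C|y|^{-(a_1+a_2+2\beta+2)}|x-y|^{2-d-\alpha}$. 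Substituting $z=x-y$ and using $|z|\le 3|y|$, the inner integral $\int_{|z|\le 3|y|}|z|^{2-d-\alpha}\,dz$ is finite (because $\alpha<2$) and equals a constant multiple of $|y|^{2-\alpha}$. The outer integral $\int_{|y|\ge 1}|y|^{-(a_1+a_2+2\beta+\alpha)}\,dy$ converges iff $a_1+a_2+2\beta+\alpha>d$; since $a_1+a_2+2\beta=p\mu$, this is exactly the hypothesis $\mu>(d-\alpha)/p$.

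On $R_2$ the inequality $|x|\ge 2|y|$ forces $|x-y|\ge|x|/2$, and one simply bounds each difference by $|y|^{-a_i}$. The integrand is then at most $C|y|^{-(a_1+a_2+\beta)}|x|^{-(\beta+d+\alpha)}$. Integrating $x$ over $|x|\ge 2|y|$ produces a factor $C|y|^{-\beta-\alpha}$ (using $\beta+\alpha>0$), and the remaining integral in $y$ converges under exactly the same threshold $\mu>(d-\alpha)/p$. No real obstacle is anticipated: the proof reduces to careful tracking of the positive exponents $a_1,a_2$ and of the matching integrability threshold, which mirrors the Hardy-weight $L^p$-condition for the formal test function $|x|^{-\mu}$ behind Theorem~\ref{thm:Theorem2}.
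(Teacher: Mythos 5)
Your proof is correct and follows essentially the same route as the paper: split according to the relative size of $|x|$ and $|y|$ (your symmetry reduction plus $R_1,R_2$ is the paper's $D_1,D_2,D_3$ decomposition), bound each difference crudely when $|x|,|y|$ are not comparable, use the mean-value estimate $|a^{-\gamma}-b^{-\gamma}|\le\gamma|a-b|(a\wedge b)^{-\gamma-1}$ near the diagonal, and conclude from $p\mu+\alpha>d$. The bookkeeping of exponents ($a_1+a_2+2\beta=p\mu$) and the integrability thresholds all check out, so no gap.
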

\begin{proof}
For $x \in B_1^c$ we let
$D_1(x) = B_1^c \cap B(0, |x|/2)=\{y:1\leq |y|< |x|/2\}$, $D_2(x) = B_1^c \cap B(0, 2|x|)^c = B(0,2|x|)^c=\{y:2|x|\leq |y|\}$, and $D_3(x) = B_1^c \setminus (D_1(x) \cup D_2(x))=\{y:|x/2|\vee 1\leq |y|<2|x|\}$. The sets form a partition of $B_1^c$.
Note that $p\mu+\alpha > (p\beta)\vee(d-\alpha)+\alpha\ge d$.
With the above integrand, $\int_{B_1^c}\int_{D_1(x)}=\int_{B_1^c}\int_{D_2(x)}$.
Indeed, the integrand is symmetric in $x,y$, on the left we integrate over $\{x,y\in B_1^c: 2|y|<|x|\}$, and on the right -- over $\{x,y\in B_1^c: 2|x|<|y|\}$.
 Then for $y \in D_1(x)$ we have $|y| \le |x|$ and $|x-y| \ge |x|/2$, hence
\begin{align*}
\int_{B_1^c}\int_{D_1(x)}
&\le 2^{d+\alpha} \int_{B_1^c} \int_{D_1(x)} |x|^{-d-\alpha} |y|^{-p\mu}  dy \, dx\\
&= 2^{d+\alpha}\int_{B_1^c}\int_{B(0,2|y|)^c} |x|^{-d-\alpha} |y|^{-p\mu}   dx \, dy
= c  \int_{B_1^c} |y|^{-p\mu-\alpha} \, dy<\infty.
\end{align*}
For $y \in D_2(x)$ we have $|y| \ge |x|$ and $|x-y| \ge |y|/2$, hence
\begin{align*}
\int_{B_1^c}\int_{D_2(x)}
&\le 2^{d+\alpha} \int_{B_1^c} \int_{D_2(x)} |x|^{\beta-p\mu} |y|^{-\beta-d-\alpha} dy \, dx = c \int_{B_1^c} |x|^{-p\mu-\alpha} dx<\infty.
\end{align*}
We next note that for $a,b>0$ and $\gamma>0$,
\begin{equation}\label{eq:lemlem}
|a^{-\gamma} -b^{-\gamma}| \leq \gamma |a-b| (a\wedge b)^{-\gamma-1}.
\end{equation}
Indeed, if, say, $a\leq b$, then
$$
a^{-\gamma} - b^{-\gamma} = \gamma \int_a^b s^{-\gamma-1} ds \leq \gamma (b-a)a^{-\gamma-1} .
$$
For $y \in D_3(x)$ we have $|x|/2 <|y|<2|x|$, therefore,
\begin{align*}
\int_{B_1^c} \int_{D_3(x)}
&\leq  c_1 \int_{B_1^c} \int_{D_3(x)} |x|^{-p\mu -2} |x-y|^{-d-\alpha+2} dy \, dx \\
&\leq  c_{1} \int_{B_1^c} \int_{B(0,3|x|)} |x|^{-p\mu -2} |z|^{-d-\alpha+2} dz \, dx
= c_{2} \int_{B_1^c} |x|^{-p\mu-\alpha} dx<\infty.
\end{align*}
\end{proof}
Here is another auxiliary estimate.
\begin{lem}\label{thm:lem6}
If $p>2$, $0<\beta<\mu$, $R>1$ and $A_R = \{x \in \Rd: 1/R<|x|<1\}$, then
\begin{align}
\nonumber
&\int_{A_R}\int_{B_1^c} \left| 1-|y|^{\beta-\mu}\right| \left||x|^{-(p-2)\beta}-|y|^{\beta-(p-1)\mu} \right||x|^{-\beta}|y|^{-\beta} \nu(x,y) \, dy \,dx \\
& \le c (1 \vee R^{(p-1)\beta-d} \vee \operatorname{log} R).
\end{align}
\end{lem}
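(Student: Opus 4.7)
The plan is to partition the domain $A_R\times B_1^c$ according to the relative sizes of $|x|$ and $|y|$ and to distinguish two effects: the unboundedness of $|x|^{-(p-1)\beta}$ near the inner boundary $|x|=1/R$, which is what produces the $R$-dependence on the right-hand side, and the singularity of $\nu(x,y)\approx|x-y|^{-d-\alpha}$ when $|x|,|y|$ are both close to $1$, which must be absorbed by cancellation of the two differences appearing in the integrand.

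First, split $B_1^c=\{1\le|y|<2\}\cup\{|y|\ge 2\}$, and further split the near part via $A_R=(A_R\cap\{|x|\le 1/2\})\cup(A_R\cap\{1/2<|x|<1\})$. In the far region $|y|\ge 2$, since $|x|<1$ we have $|x-y|\ge|y|/2$ and hence $\nu(x,y)\le C|y|^{-d-\alpha}$; the factors $|1-|y|^{\beta-\mu}|$ and $|y|^{\beta-(p-1)\mu}$ are bounded by $1$, while $|x|^{-(p-2)\beta}\ge 1$ dominates, so $\bigl||x|^{-(p-2)\beta}-|y|^{\beta-(p-1)\mu}\bigr|\le 2|x|^{-(p-2)\beta}$. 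The integrand is therefore majorized by $C|x|^{-(p-1)\beta}|y|^{-\beta-d-\alpha}$, the $y$-integral is finite, and the radial computation
\[
\int_{A_R}|x|^{-(p-1)\beta}\,dx=c\int_{1/R}^{1}r^{d-1-(p-1)\beta}\,dr\le c\bigl(1\vee R^{(p-1)\beta-d}\vee\log R\bigr)
\]
splits automatically into the three cases $(p-1)\beta$ being less than, equal to, or greater than $d$, yielding the announced bound for this piece.

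On $\{1\le|y|<2,\ |x|\le 1/2\}$ one has $|x-y|\ge 1/2$, so $\nu(x,y)\le C$; bounding $\bigl||x|^{-(p-2)\beta}-|y|^{\beta-(p-1)\mu}\bigr|\le 2|x|^{-(p-2)\beta}$ and integrating the bounded $y$-variable out reduces to the same radial integral as above, with the same bound. The genuinely delicate piece is $\{1/2<|x|<1,\ 1\le|y|<2\}$, where the power factors $|x|^{\pm},|y|^{\pm}$ are all comparable to $1$ but $\nu(x,y)$ is singular. Here I would apply Taylor's theorem to the smooth maps $t\mapsto t^{\beta-\mu}$, $t\mapsto t^{-(p-2)\beta}$, and $t\mapsto t^{\beta-(p-1)\mu}$ near $t=1$ to obtain
\[
|1-|y|^{\beta-\mu}|\le C(|y|-1),\qquad \bigl||x|^{-(p-2)\beta}-|y|^{\beta-(p-1)\mu}\bigr|\le C\bigl((1-|x|)+(|y|-1)\bigr).
\]
Since $|x|<1\le|y|$, both right-hand sides are dominated by $C(|y|-|x|)\le C|x-y|$ by the reverse triangle inequality, so the integrand in this region is at most $C|x-y|^{2-d-\alpha}$. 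This is integrable over the bounded product domain $\{1/2<|x|<1\}\times\{1\le|y|<2\}$ precisely because $\alpha<2$, and it contributes only a constant independent of $R$.

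The main obstacle I anticipate is the last step: one must check that both differences contribute exactly one factor of $|x-y|$, so that the net singularity of the integrand is $|x-y|^{2-d-\alpha}$, which is integrable only under the standing hypothesis $\alpha<2$ — had the cancellation been weaker, this strategy would collapse. Everything else is radial bookkeeping, and summing the three contributions yields the stated bound $c(1\vee R^{(p-1)\beta-d}\vee\log R)$.
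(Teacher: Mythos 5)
Your proof is correct and follows essentially the same route as the paper's: a crude off-diagonal bound reducing the $R$-dependence to the radial integral $\int_{A_R}|x|^{-(p-1)\beta}\,dx$, plus first-order cancellation of both differences near $|x|\approx|y|\approx 1$, yielding an integrable singularity $|x-y|^{2-d-\alpha}$ since $\alpha<2$. The only cosmetic differences are your slightly different grouping of the far region $\{|y|\ge 2\}$ and your use of Lipschitz/Taylor bounds at $1$ where the paper invokes its elementary inequality \eqref{eq:lemlem}; these are interchangeable.
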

\begin{proof}
We split the integral as follows:
$$
\int_{A_R}\int_{B_1^c}=\int_{D_1}\int_{B_1^c} + \int_{D_2}\int_{D_3} + \int_{D_2}\int_{D_4} =: I_1 + I_2 + I_3,
$$
where $D_1 = \{ 1/R < |x| \leq 1/2 \}$, $D_2 = \{ 1/2 \le |x|<1\}$, $D_3 = \{1<|y| < 2\}$ and $D_4 = \{|y| \geq 2\}$.
If $x \in A_R$ and $y \in B_1^c$, then $|y|^{\beta-\mu} \le 1$ and $|x|^{-(p-2)\beta} \ge |y|^{\beta-(p-1)\mu}$.
Furthermore, for $x \in D_1$ and $y \in B_1^c$, we have $|x-y|>|y|/2$, hence
\begin{align*}
I_1 &\leq c_1  \int_{D_1} \int_{B_1^c} |x|^{-(p-1)\beta} |y|^{-d-\alpha-\beta} dy \, dx
=c_2 \int_{D_1} |x|^{-(p-1)\beta} dx \\
& \le c(1 \vee R^{(p-1)\beta  - d} \vee \operatorname{log} R).
\end{align*}
If $x \in D_2$, $y\in D_3$, then $1-|y|^{\beta-\mu} < |x|^{\beta-\mu} - |y|^{\beta-\mu}$ and $|x|^{-(p-2)\beta}-|y|^{\beta-(p-1)\mu} < |x|^{\beta-(p-1)\mu}-|y|^{\beta-(p-1)\mu}$, so by \eqref{eq:lemlem},
\begin{align*}
|I_2| &\leq c \int_{D_2} \int_{D_3} |x|^{\beta-p\mu-2} |y|^{-\beta} |x-y|^{-d-\alpha+2} dy \, dx \leq c_1 \int_{D_2}\int_{D_3} |x-y|^{-d-\alpha+2} dy \, dx \\
&\leq c_1 \int_{D_2}\int_{B(0,3)} |z|^{-d-\alpha+2} dz \, dx < \infty.
\end{align*}
To estimate $I_3$, we note that for $x\in D_2$, $y\in D_4$  we have  $|x-y| \geq \frac{1}{2}|y|$, so
\begin{align*}
I_3 &\leq c \int_{D_2} \int_{D_4}  |x-y|^{-d-\alpha} |y|^{-\beta}dy \, dx \leq c_2\int_{D_4}  |y|^{-d-\beta-\alpha} dy \, dx  <\infty.
\end{align*}
\end{proof}
\begin{proof}[Proof of Theorem~\ref{thm:Theorem2}]
By Theorem \ref{thm:Theorem1} and \eqref{e:ip}, $\EEE_p[u] \geq \kappa_{(d-\alpha)/p} \int_{\Rd} |u(x)|^p|x|^{-\alpha} dx$  for $u\in L^p(dx)$. Let $\kappa>\kappa_{(d-\alpha)/p}$. It suffices to verify that
\begin{equation}\label{e.onH}
\EEE_p[u] < \kappa \int_{\Rd} \frac{|u(x)|^p}{|x|^{\alpha}} dx<\infty
\end{equation}
for some $u \in L^p(\Rd, (1 \vee |x|^{-\alpha}) dx)$.

We first consider the case $p>2$.
Let $(d-\alpha)/p<\beta<(p-1)(d-\alpha)/p$.
We note that $\kappa_{\beta} > \kappa_{(d-\alpha)/p}$.
As usual, let $h(x)=h_\beta(x)= |x|^{-\beta}.$
Let $\mu>\beta \vee d/p$, $R>1$ and
\begin{equation}\label{e.mi}
u_R(x) =  \left|x \right|^{-\beta/(p-1)} \wedge \left|x \right|^{-\beta} \wedge R^{\mu -\beta} |x|^{-\mu},\quad x\in \Rd,
\end{equation}
comp. \eqref{e.mi2}. This will be our test function for the Hardy inequality \eqref{e.Hp}.
 We have
$u_R \in L^p(\Rd, (1+ |x|^{-\alpha}) dx)$. Therefore not only the Hardy inequality holds true for $u_R$, but also the representation from Lemma \ref{thm:lem4} is valid:
\begin{align*}
&\mathcal{E}_p[u_R] = \kappa_{\beta} \int_{\Rd} \frac{\left|u_R(x)\right|^p}{\left|x\right|^{\alpha}} dx \\
 &+ \underset{t\to 0}\lim \ \frac{1}{2}\int_{\Rd} \int_{\Rd} \left( \frac{u_R(x)}{h(x)}-\frac{u_R(y)}{h(y)} \right) \left( \frac{u_R(x)^{\langle p - 1 \rangle} }{h(x)}-\frac{u_R(y)^{\langle p - 1 \rangle} }{h(y)} \right) h(x) h(y) \frac{p_t(x,y)}{t} \,dy \,dx,
\end{align*}
and
the first integral on the right-hand side is finite. We next show that the limit  of the above double integral can be made negative by choosing a sufficiently large $R.$
Let
$B_1=B(0,1)$,
$B_2=B_2^R=B(0,R)\setminus B(0,1)$, $B_3 =B_3^R= B(0,R)^c$. By symmetry, we can split the integral as follows:
\begin{align*}
\int_{\Rd}\int_{\Rd} &= \int_{B_1}\int_{B_1} + \int_{B_2}\int_{B_2} + \ 2 \int_{B_1}\int_{B_2} + \int_{B_3}\int_{B_3} + \ 2 \int_{B_1}\int_{B_3} + \ 2\int_{B_2}\int_{B_3} \\
&=: I_1 + I_2 + 2I_3^R + I_4^R + 2I_5^R + 2I_6^R.
\end{align*}
Note that $u_R(x)=|x|^{-\beta/(p-1)}$ on $B_1$, $u_R(x)=|x|^{-\beta}$ on $B_2$ and $u_R(x)=R^{\mu-\beta}|x|^{-\mu}$ on $B_3$.
  We see that $I_1=I_2=0$  and the the integrand in $I_3^R$  is negative.
Moreover, observe that $I_3^R$ decreases when $R$ increases, so
there is a constant $A_0>0$ such that
\begin{align*}
&\limsup_{t\to 0} \int\limits_{B_1}\int\limits_{B_2^R} \left( \frac{u_R(x)}{h(x)}-\frac{u_R(y)}{h(y)} \right) \left( \frac{u_R(x)^{\langle p - 1 \rangle} }{h(x)}-\frac{u_R(y)^{\langle p - 1 \rangle} }{h(y)} \right) h(x) h(y) \frac{p_t(x,y)}{t} \,dy \,dx\\
& <-A_0
\end{align*}
for all  $R>1$. We also have
\begin{align*}
|I_4^R| &\leq  C\int_{B_3} \int_{B_3} R^{p(\mu-\beta)} \left| |x|^{\beta-\mu} - |y|^{\beta-\mu} \right| \left||x|^{\beta-(p-1)\mu}-|y|^{\beta-(p-1)\mu}\right| |x|^{-\beta} |y|^{-\beta} \nu(x,y) \, dy \, dx \\
&=  C R^{d-\alpha-p\beta} \int_{B_1^c}\int_{B_1^c}\left| |x|^{\beta-\mu} - |y|^{\beta-\mu} \right| \left||x|^{\beta-(p-1)\mu}-|y|^{\beta-(p-1)\mu}\right| |x|^{-\beta}|y|^{-\beta} \nu(x,y) \, dy \, dx \\
&\to 0 \text{ as } R\to \infty,
\end{align*}
see Lemma \ref{thm:lem5}.
Furthermore,
\begin{align*}
|I_5^R| &\leq C\int_{B_1}\int_{B_3} \left| |x|^{(p-2)\beta/(p-1)} - R^{\mu-\beta} |y|^{\beta-\mu} \right| \left|1-R^{(p-1)(\mu-\beta)}|y|^{\beta-(p-1)\mu}\right| |x|^{-\beta}|y|^{-\beta}\nu(x,y) \, dy \, dx \\
& \leq C\int_{B_1}\int_{B_3} |x|^{-\beta/(p-1)} |y|^{-\beta} \nu(x,y) \, dy \, dx +C \int_{B_1} \int_{B_3} |x|^{-\beta}|y|^{\beta-p \mu} R^{p(\mu-\beta)} \nu(x,y) \, dy \, dx \\
&\leq c\int_{B_3} |y|^{-\beta-d-\alpha} dy + cR^{p(\mu-\beta)} \int_{B_3} |y|^{\beta-p\mu-d-\alpha} dy \\
& = c_1 \left(R^{-\beta-\alpha} + R^{-(p-1)\beta-\alpha}\right)\to 0 \text{ as } R\to \infty.
\end{align*}
Finally,
\begin{align*}
|I_6^R| &\leq C\int_{B_2}\int_{B_3} \left| 1-R^{\mu-\beta}|y|^{\beta-\mu}\right| \left||x|^{-(p-2)\beta} - R^{(p-1)(\mu-\beta)}|y|^{\beta-(p-1)\mu}\right| |x|^{-\beta} |y|^{-\beta} \nu(x,y) \, dy \, dx  \\
&= CR^{d-\alpha-p\beta}\int_{A_R}\int_{B_1^c} \left| 1-|y|^{\beta-\mu}\right| \left||x|^{-(p-2)\beta}-|y|^{\beta-(p-1)\mu} \right| |x|^{-\beta}|y|^{-\beta} \nu(x,y) \, dy \,dx \\
&\to 0 \text{ as } R\to \infty,
\end{align*}
see Lemma \ref{thm:lem6}.
Hence, for $R$ sufficiently large,
$\EEE_p[u_R] < \kappa_{\beta} \int_{\Rd} |u_R(x)|^p|x|^{-\alpha} dx$.

Since $\beta\mapsto\kappa_{\beta}$ is continuous, symmetric about $(d-\alpha)/2$ and increasing on $\left(0,(d-\alpha)/2\right)$,
there is $\beta \in \left((d-\alpha)/p, (p-1)(d-\alpha)/p \right)$ such that $\kappa_{(d-\alpha)/p} < \kappa_{\beta} < \kappa$. Then $\EEE_p[u_R] < \kappa \int_{\Rd} |u_R(x)|^p |x|^{-\alpha} dx,$ which proves \eqref{e.onH}, and the theorem, for $p>2$.

We now consider $p \in(1,2)$. Let $p'= \frac{p}{p-1}$ be its H\"{o}lder conjugate. Of course, $p' \in (2, \infty)$ and $(p-1)(p'-1)=1$, or $(p-1)p'=p$.
For $u \in L^p(\Rd)$ we let $v=u^{\langle p-1 \rangle}$, i.e., $v^{\langle p'-1\rangle}=u$. We have $|u|^p=|v|^{p'}$,
so $\int_\Rd |v|^{p'} dx=\int_\Rd |u|^p dx$ and $v \in L^{p'}(\Rd)$. Also,
\begin{align}
\mathcal{E}_{p'}[v] &=\frac{1}{2} \int_{\Rd} \int_{\Rd} (v(x)^{\langle p'- 1\rangle} - v(y)^{\langle p' - 1 \rangle})(v(x) - v(y))  \nu(x,y) \, dy \, dx
= \mathcal{E}_p[u]
\end{align}
and
\begin{align}
\int_{\Rd} \frac{|v(x)|^{p'}}{|x|^{\alpha}} dx=\int_{\Rd} \frac{|u(x)|^p}{|x|^{\alpha}} dx.
\end{align}
It follows from the first part of the proof that the Hardy inequality holds for the exponents $p$ and $p'$ with the same constants. In particular, the constant $\kappa_{(d-\alpha)/p}$ is optimal on $L^{p'}(\Rd)$. This conforms with our claim, since
$ \kappa_{(d-\alpha)/p'}=  \kappa_{(d-\alpha)/p}$, as noted in the proof of Lemma~\ref{thm:lem7}.
\end{proof}
The following technical result will be useful in Section~\ref{s.ap}. Its proof may be of independent interest, since it sheds some light on the structure of $\mathcal D(\EEE_p)$.
\begin{lem}\label{l.tm}
If $p>2$ then the inequality \eqref{e.onH} holds for some  function in $C^\infty_c(\Rd)_+$.
\end{lem}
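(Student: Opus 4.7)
The plan is to regularise the extremal-like function $u_R$ from the proof of Theorem~\ref{thm:Theorem2} and verify that the strict Hardy-type inequality \eqref{e.onH} survives the regularisation.

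Fix $\kappa > \kappa_{(d-\alpha)/p}$. Since $\beta \mapsto \kappa_\beta$ is continuous, choose $\beta \in \bigl((d-\alpha)/p,\,(p-1)(d-\alpha)/p\bigr)$ and an intermediate value $\kappa'$ with $\kappa_\beta < \kappa' < \kappa$. By the proof of Theorem~\ref{thm:Theorem2}, for $R$ sufficiently large the nonnegative function $u_R$ defined by \eqref{e.mi} satisfies
\[
\EEE_p[u_R] \;<\; \kappa' \int_\Rd \frac{|u_R(x)|^p}{|x|^\alpha}\,dx \;<\; \infty.
\]
Recall that $u_R \in L^p(\Rd,(1+|x|^{-\alpha})\,dx)$ is nonnegative, but singular at the origin, has corners on $\{|x|\in\{1,R\}\}$, and is not compactly supported.

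To replace $u_R$ by a smooth compactly supported function, fix a radial mollifier $\phi \in C^\infty_c(\Rd)_+$ with $\int\phi = 1$, $\operatorname{supp}\phi \subset B_1$, and set $\phi_\epsilon(x) = \epsilon^{-d}\phi(x/\epsilon)$. Fix a cutoff $\chi \in C^\infty_c(\Rd)$ with $0\le \chi \le 1$, $\chi \equiv 1$ on $B_1$, $\operatorname{supp}\chi \subset B(0,2)$, and put $\chi_N(x) = \chi(x/N)$. Define
\[
v_{N,\epsilon} := \phi_\epsilon * (\chi_N u_R) \;\in\; C^\infty_c(\Rd)_+.
\]
Pointwise $v_{N,\epsilon}(x) \to u_R(x)$ for a.e. $x$ as $\epsilon \to 0^+$ and $N \to \infty$, so Fatou's lemma yields
\[
\liminf \int_\Rd \frac{|v_{N,\epsilon}(x)|^p}{|x|^\alpha}\,dx \;\ge\; \int_\Rd \frac{|u_R(x)|^p}{|x|^\alpha}\,dx.
\]

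For the form one would combine two facts. First, convolution with the probability density $\phi_\epsilon$ is a symmetric Markov operator that commutes with $P_t$, and for such operators $\EEE_p$ is contractive: $\EEE_p[\phi_\epsilon * w] \le \EEE_p[w]$. This is the $L^p$ analogue of the classical Dirichlet-form contractivity and can be derived from \eqref{e.pfag} along the lines of \cite[Theorem~1]{MR1160303}. Second, the truncation error $\EEE_p[\chi_N u_R] - \EEE_p[u_R] \to 0$ as $N \to \infty$; one splits the double integral into pieces where at least one of $|x|,|y|$ exceeds $N$, dominates the integrand by means of the equivalence $(b-a)(b^{\langle p - 1 \rangle}-a^{\langle p - 1 \rangle}) \asymp (b-a)^2(|b|+|a|)^{p-2}$ from \eqref{eq:2e}, and exploits the fast tail decay $u_R(x)=O(|x|^{-\mu})$ with estimates reminiscent of Lemmas~\ref{thm:lem5}--\ref{thm:lem6}. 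Combining, $\limsup \EEE_p[v_{N,\epsilon}] \le \EEE_p[u_R] < \kappa'\int_\Rd |u_R|^p|x|^{-\alpha}\,dx$, so that for $N$ large and $\epsilon$ small
\[
\EEE_p[v_{N,\epsilon}] \;<\; \kappa \int_\Rd \frac{|v_{N,\epsilon}(x)|^p}{|x|^\alpha}\,dx,
\]
and $u:= v_{N,\epsilon}$ witnesses \eqref{e.onH}. The main obstacle is the second step: because $\chi_N$ is only a multiplier rather than a symmetric Markov operator, no abstract contractivity is available, and one has to carry out the regional decomposition of the nonlocal form by hand, using the tail decay of $u_R$ to control the contributions from $\{|x|>N\}\cup\{|y|>N\}$.
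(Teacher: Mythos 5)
Your overall strategy (regularise $u_R$ from \eqref{e.mi} and check that the strict inequality survives) is the right one, but the two steps on which your argument hinges are precisely the ones you do not prove, and the first of them is not a known fact. The claimed contractivity $\EEE_p[\phi_\epsilon\ast w]\le\EEE_p[w]$ is standard only for $p=2$ (via Fourier transform/spectral theory); for $p>2$ the symmetrized Bregman integrand $(b-a)(b^{\langle p-1\rangle}-a^{\langle p-1\rangle})$ is not jointly convex (it is not even convex in $a$ for fixed $b$), so the Jensen-type argument behind the quadratic case breaks down, and "deriving it from \eqref{e.pfag} along the lines of \cite[Theorem 1]{MR1160303}" only yields the two-sided comparison \eqref{e.cHkf} with constants $4(p-1)/p^2$ and $2$. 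Any such loss of constants is fatal here: by Theorem~\ref{thm:Theorem2} the margin in \eqref{e.onH} for $u_R$ is thin (one works with $\kappa_\beta$ barely above $\kappa_{(d-\alpha)/p}$), so you genuinely need $\limsup_{\epsilon\to 0}\EEE_p[\phi_\epsilon\ast w]\le\EEE_p[w]$ with constant exactly $1$, and you give no proof of that. The second step, $\EEE_p[\chi_N u_R]-\EEE_p[u_R]\to 0$, is only sketched and, as you yourself note, is "the main obstacle"; it would require carrying out the regional estimates in the spirit of Lemmas~\ref{thm:lem5}--\ref{thm:lem6}, which you do not do.

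For comparison, the paper sidesteps both difficulties. Compact support is obtained not by a spatial cutoff but by the value truncation $u\mapsto(u-\epsilon)\vee 0$: since $u_R$ is radially decreasing and vanishes at infinity, $(u_R-\epsilon)_+$ is automatically compactly supported, and the error is controlled pointwise via \eqref{eq:2e} and dominated convergence, with no decomposition of the double integral. Smoothing is then performed on the power $u^{\langle p/2\rangle}$ rather than on $u$: one mollifies in the $L^2$ Dirichlet space, where $\mathcal E[v_\eta-u^{\langle p/2\rangle}]\to 0$ is available, and transfers back to $\EEE_p$ of $u_\eta=v_\eta^{\langle 2/p\rangle}$ through uniform integrability and Vitali's theorem. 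The point is that the lossy comparison \eqref{e.cHk} is used only qualitatively (to get uniform integrability), while the limit $\EEE_p[u_\eta]\to\EEE_p[u]$ is exact, so no constant is lost. If you want to salvage your route, you would either have to prove the $\EEE_p$-contractivity of mollification for $p>2$ (an assertion that needs a proof, not a citation) or switch, as the paper does, to mollifying $u^{\langle p/2\rangle}$ and arguing by uniform integrability.
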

\begin{proof}
Assume that
$\kappa >\kappa_{(d-\alpha)/p}$.
Let $u$ be the function defined by \eqref{e.mi} with suitable $R$
and such that  \eqref{e.onH} holds.
The function is radially decreasing, meaning that $u(x)\le u(y)$ if $|x| \ge |y|$.
For $\epsilon>0$ we let $\phi_\epsilon(a)=(a-\epsilon)\vee 0$, $a\in\mathbb R$.
Of course,
\begin{equation}\label{e.ctr}
|\phi_\epsilon(a)| \leq |a| \qquad \mbox{and}\qquad |\phi_\epsilon(b)-\phi_\epsilon(a)|\leq |b-a|,\quad a,b\in\mathbb R.
\end{equation}
Consequently, by \eqref{eq:2e},
\begin{eqnarray*}
F_p(\phi_\epsilon(u(x)),\phi_\epsilon(u(y))) &\approx& (\phi_\epsilon(u(y))-\phi_\epsilon(u(x)))^2(|\phi_\epsilon(u(y))|+|\phi_\epsilon(u(x))|)^{p-2}\\
&\leq &(u(y)-u(x))^2(|u(y)|+|u(x)|)^{p-2} \\
&\approx & F_p(u(x),u(y)).
\end{eqnarray*}
Since $\phi_\epsilon(u)\to u$ as $\epsilon\to 0$, from the dominated convergence theorem we get
\[\mathcal E_p[\phi_\epsilon(u)]\to \mathcal E_p[u].\]
By Fatou's lemma
\[\liminf_{\epsilon\to 0}\int\frac{|\phi_\epsilon(u)|^p}{|x|^\alpha}\,dx \ge \int\frac{|u|^p}{|x|^\alpha}\,dx,\]
therefore taking a sufficiently small $\epsilon$ we get that \eqref{e.onH} holds with $\phi_\epsilon(u)$.
Slightly abusing the notation, below we write $u$ for the latter function, so $u:=\phi_\epsilon(u)$.
The function is radially decreasing and compactly supported and
we have $u^{\langle p/2\rangle}\in \mathcal D(\mathcal E),$ which follows from \eqref{e.cHk}.
Let $\varphi:\mathbb R^d\to\mathbb R_+$ be smooth, compactly supported, radially decreasing and such that $\int \varphi=1$.
For $\eta>0$ denote $\varphi_\eta(x)=\eta^{-d}\varphi(x/\eta)$
and let
\[
v_\eta=u^{\langle p/2\rangle}\ast \varphi_\eta.
\]
Each $v_\eta$ is smooth, nonnegative and of compact support. It is also radially decreasing, as a convolution of two such functions, see Beckner \cite[page 171]{MR385456}.
It is evident that $v_\eta\to u^{\langle p/2\rangle}$ (pointwise) as $\eta\to 0$.
By the arguments from \cite[the proof Lemma A.5]{MR4088505} we see that $\mathcal E[v_\eta-u^{\langle p/2\rangle}]\to 0$, hence by
Vitali's theorem, the functions
\[\mathbb R^d\times\mathbb R^d\ni(x,y)\mapsto (v_\eta(x)-v_\eta(y))^2\]
are uniformly integrable with respect to $\nu(x,y)dxdy$,
see Schilling \cite[Theorem 22.7]{MR3644418}.
Let
\begin{equation}\label{eq:approxi}
u_{\eta} = \left(v_\eta\right)^{\langle 2/p\rangle}=\left(u^{\langle p/2\rangle}\ast \varphi_\eta\right)^{\langle 2/p\rangle}.
\end{equation}
Since $v_\eta\ge 0$ is smooth and radially decreasing, $u_{\eta}$ is smooth, despite the fractional exponent in its definition.
It is clear that $u_{\eta}\to u$ pointwise. Since
\[F_p(u_{\eta}(x),u_{\eta}(y))\approx (v_\eta(x)-v_\eta(y))^2,\]
the left-hand side is uniformly integrable with respect to $\nu(x,y)dxdy$.
By Vitali's theorem again, $\mathcal E_p[u_{\eta}]\to\mathcal E_p[u]$ as $\eta\to 0$.
By Fatou's lemma, taking sufficiently small $\eta$ we see that \eqref{e.onH} holds with $u:=u_\eta.$
The proof is complete.
\end{proof}

\section{Application to parabolic equation}\label{s.ap}
In this section we prove for $p\in (1,\infty)$ that the Feynman-Kac semigroup generated by the Schr\"{o}dinger operator $\Delta^{\alpha/2}+\kappa_{(d-\alpha)/p}|x|^{-\alpha}$ is a contraction on $L^p(\Rd)$, and the threshold $\kappa_{(d-\alpha)/p}$ is sharp.
We shall also see that the semigroup generated by
$\Delta^{\alpha/2}+
\kappa |x|^{-\alpha}$ is bounded on $L^p(\Rd)$
if and only if either $d/p^*\ge (d-\alpha)/2$ and $\kappa \le \kappa_{(d-\alpha)/2}$ or
$d/p^*< (d-\alpha)/2$ and $\kappa \le \kappa_{d/p^*}$
Here $p^*=\max\{p,p/(p-1)\}$.

As usual, let $\alpha\in (0,2)$ and $\alpha<d\in \N$. For $\delta\in [0, (d-\alpha)/2]$, $\kappa=\kappa_\delta$ and  $q=q_\delta$ as in \eqref{e:qbeta},  we define (cf. \cite{MR3933622}) the Schr\"odinger perturbation of $p_t$ from \eqref{e.dp} by $q=q_\delta:$
\begin{equation}\label{def_p_tilde}
   \tilde p_t = \sum_{n=0}^{\infty }p_{t}^{(n)}.
\end{equation}
Here for $t>0$ and $x,y\in \Rd$ we let $p_{t}^{(0)}(x, y) = p_t(x, y)$ and
\begin{align}\label{pn}
     p_{t}^{(n)}(x, y) &= \int_0^t \int_{\Rd} p_s(x, z) q(z)p_{t-s}^{(n-1)}(z, y) \, dz ds\\
     &=\int_0^t \int_{\Rd} p_{s}^{(n-1)}(x, z) q(z)p_{t-s}(z, y) \, dz ds
     , \quad n \geq 1.\notag
\end{align}
From the general theory \cite{MR3460023}, $\tilde p_t$ is a symmetric transition density, i.e., the following Chapman-Kolmogorov equation holds:
\begin{align}\label{eq:ChK}
\int_\Rd \tp_s(x,z) \tp_t(z,y)\, dz = \tp_{t+s}(x,y)\,.
\end{align}
The scaling of $\tp_t(x,y)$ is the same as that of $p_t(x,y)$ \cite[Lemma 2.2]{MR3933622}:
\begin{equation}\label{ss form}
  \tilde p_t(x, y)=t^{-\frac{d}{\alpha }}\tilde p_1\big(xt^{-\frac{1}{\alpha }}, yt^{-\frac{1}{\alpha }}\big), \qquad  t>0,  \ x,y \in \Rd.
\end{equation}
For $t>0$ and $x\neq 0$ we define the Feynman-Kac semigroup
$$\tilde P_t f(x) = \int_\Rd \tilde p_t(x, y) f(y) dy.$$
The integral certainly makes sense if $f$ is nonnegative, but the following result in fact shows that $\tilde P_t$ may be contractive on $L^p(\Rd)$.
Before we proceed, we note that for nonnegative functions $f,g$ on $\Rd$ we have
\begin{equation}\label{e.duality}
\int_\Rd\int_\Rd \tilde p_t(x,y)f(y)dy\ g(x)dx=
\int_\Rd\int_\Rd \tilde p_t(x,y)g(y)dy\ f(x)dx.
\end{equation}
By H\"{o}lder inequality, for each $p\in (1,\infty)$ the operator norm of $\tilde P_t$ on $L^p$ is the same as on $L^{p/(p-1)}$ -- below this fact will be referred to as the duality argument.
Furthermore, by \eqref{ss form} we have
\begin{equation}\label{e.es}
\|\tilde P_t\|_{L^p(\Rd)\to L^p(\Rd)}=\|\tilde P_1\|_{L^p(\Rd)\to L^p(\Rd)}, \quad t>0.
\end{equation}
\begin{rem}\label{r.n}
For clarity we note that the heat kernel $\tilde p_t$ of $\Delta^{\alpha/2}+\kappa |x|^{-\alpha}$  can be defined also for  
$\kappa\in (-\infty, 0)$, see Jakubowski and Wang \cite{MR4140086}, see also Cho, Kim, Song and Vondra\v{c}ek \cite{MR4163128}. Then
$0\le \tilde p_t\le p_t$, so $\|\tilde P_t\|_{L^p(\Rd)\to L^p(\Rd)}\le \|P_t\|_{L^p(\Rd)\to L^p(\Rd)}=1$ for every $p\ge 1$. 
Further, for $\kappa>\kappa_{(d-\alpha)/2}$ we have $\tilde p_t(x,y)\equiv \infty$ by \cite[Corollary~4.11]{MR3933622}, so $\|\tilde P_t\|_{L^p(\Rd)\to L^p(\Rd)}=\infty$ in this case. The remaining cases are resolved in 
Theorem ~\ref{t.znHp} and \ref{t.bd}, by considering $\kappa=\kappa_\delta$  with $\delta\in [0, (d-\alpha)/2]$.
\end{rem}
We recall that analogous perturbations of local elliptic operators have been widely investigated.
We refer, e.g., to Kovalenko, Perelmuter and Semenov \cite{MR622855}. In particular, the approach covers Hardy-type perturbations of the classical Laplacian. See also
\cite{MR4043021}.
 The relation of the Hardy inequality to the contractivity of the corresponding Feynman-Kac semigroups is also the subject of \cite{MR2259099}. In
\cite[Corollary 1.2]{MR2259099} the authors use the Hardy inequality to prove that
 the semigroup corresponding to the operator $-\Delta+\kappa|x|^{-2}$
is contractive on $L^p(\Rd)$ for $d\ge 2$ if and only if $\kappa\le (d-2)^2(p-1)/p^2$. This accords well with Theorem~\ref{t.znHp} because
$(d-2)^2(p-1)/p^2=\kappa_{(d-\alpha)/p}$, if we let $\alpha=2$.

Let us first present an informal idea of the proof of Theorem~\ref{t.znHp}.
Consider
\begin{align}\label{eq:CP}
\begin{cases}
u_t = \Delta^{\alpha/2} u + \kappa |x|^{-\alpha} u, \qquad t>0, x \in \R^d,\\
u(0,x) = f(x), \qquad x \in \R^d.
\end{cases}
\end{align}
The semigroup solution of this Cauchy problem is $u(t,x)=\tilde P_t f(x)$, at least for $f$ in the domain of the generator.
We multiply both sides of \eqref{eq:CP} by $u^{\langle p-1\rangle}$ and integrate
\begin{align*}
\int_{\R^d} u_t u^{\langle p-1\rangle} dx = \int_{\R^d} u^{\langle p-1\rangle} \Delta^{\alpha/2}u dx + \kappa \int_{\R^d} |u|^p |x|^{-\alpha} dx.
\end{align*}
By calculus and \eqref{e.Hp},
\begin{align}\label{e.znH}
\frac{\partial}{\partial t}\int_{\R^d} \frac{|u|^p}{p}  dx \le  \left(- \kappa_{(d-\alpha)/p}+\kappa\right) \int_{\R^d} |u|^p |x|^{-\alpha} dx \le 0,
\end{align}
hence $\|u(t,\cdot)\|_p^p$ is decreasing, and so $\|\tilde P_t f\|_p \le \|f\|_p$. On the other hand, for $\kappa>\kappa_{(d-\alpha)/p}$ we have the opposite inequality for the function from Lemma~\ref{l.tm}.
We now present rigorous arguments (some technical results used in the proof are still deferred to Section~\ref{s.A}).
\begin{proof}[Proof of Theorem~\ref{t.znHp}]
In the case of $p=2$, the condition $\delta\le (d-\alpha)/2$ is clearly met, while the contractivity of $\tilde P_t$ is proved in \cite[Proposition 2.4]{MR3933622}, using Schur's test.
Let $p\in (2,\infty)$.
 For $M\in (0,\infty)$ let $q^{(M)}(x)=q(x)\wedge M$, where, recall, $q=q_\delta$.
 In a similar manner as above, we define the Schr\"odinger perturbation of $p_t$ by $q^{(M)}$, which we denote $\tilde p^{(M)}_t$. Since $q^{(M)}$ is bounded, by Phillips' perturbation theorem the domain of the generator $\Delta^{\alpha/2}$ of the strongly continuous semigroup $P_t$ on $L^p(\Rd)$ is the same as the domain of the generator, $\Delta^{\alpha/2}+q^{(M)}$, of the strongly continuous semigroup
$$\tilde P^{(M)}_t f(x) := \int_\Rd \tilde p^{(M)}_t(x, y) f(y) dy.$$
We refer to Phillips \cite[Theorem~3.2]{MR54167} and \cite[Lemma 4.2]{MR3613319} for details.
Let $f$ be in the domain of $\Delta^{\alpha/2}$ on $L^p(\Rd)$.
Let $u^{(M)}(t,x)=\tilde P^{(M)}_t f(x)$.
Denote $u^{(M)}(t)=u^{(M)}(t,\cdot)$.
From the general semigroup theory, the mapping $[0,\infty)\ni t\mapsto u^{(M)}(t)\in L^p(\Rd)$ is continuously differentiable.
We then verify that $t \mapsto |u^{(M)}(t)|^p$ and $t \mapsto u^{(M)}(t)^{\langle p - 1 \rangle}$ are continuous in $L^1(\Rd)$ and $L^{\frac{p}{p-1}}(\Rd)$, respectively, see Lemma \ref{thm:difpow0}.
Then it follows from Lemma \ref{lem:pdif} that $|u^{(M)}(t)|^p$ is continuously differentiable in $L^1(\Rd)$ and
\begin{align}\label{eq:diff}
\frac{d}{dt}\int_\Rd |u^{(M)}(t)|^p dx&=
\int_\Rd \frac{d}{dt}|u^{(M)}(t)|^p dx=\int_\Rd p u^{(M)}(t)^{\langle p-1 \rangle}\frac{d}{dt}u^{(M)}(t) dx \nonumber \\
&=
p\int_\Rd u^{(M)}(t)^{\langle p-1 \rangle}(\Delta^{\alpha/2} u^{(M)}(t)+q^{(M)}u^{(M)}(t)) dx\nonumber \\
&= p\left(-\mathcal E_p[u^{(M)}(t)] +\int_{\mathbb R^d}q^{(M)}|u^{(M)}(t)|^p dx\right)
\le 0,
\end{align}
provided $\kappa\in [0,\kappa_{(d-\alpha)/p}]$, see \eqref{e.znH}. We then get $\int_\Rd |\tilde P_t^{(M)} f|^pdx\le \int_\Rd |f|^p dx$. This extends to all $f\in L^p(\Rd)$ by the density of the domain of the generator.
We then let $M\to \infty$. By monotone convergence,  $\tilde p_t^{(M)}\uparrow \tilde p_t$ and for every nonnegative $f\in L^p(\Rd)$,
 \[\int_\Rd (\tilde P_t f)^p dx\leftarrow \int_\Rd (\tilde P^{(M)}_t f)^p dx\le \int_\Rd f^p dx.\]
The estimate for general $f\in L^p(\Rd)$ follows by considering $f=f_+-f_-$, see \eqref{eq:fgen}.

The case of $p\in (1,2)$ results from duality with the same range of $\kappa$ as for the H\"{o}lder conjugate of $p$, that is
the contractivity of $\tilde P_t$ holds for $\kappa\in[0,\kappa_{(d-\alpha)/p'}]$ with $p'=p/(p-1)$. But $\kappa_{(d-\alpha)/p}=\kappa_{(d-\alpha)-(d-\alpha)/p} = \kappa_{(d-\alpha)/p'}$. Thus, the contractivity of $\tilde P_t$ on $L^p(\Rd)$ is proved if $p\in (1,\infty)$ and $\kappa\le \kappa_{(d-\alpha)/p}$.

We next assume $\kappa >\kappa_{(d-\alpha)/p}$ and prove that the contractivity fails. To this end
we first consider $p\in (2,\infty)$.
By Lemma~\ref{l.tm}
there is a nonnegative $f\in C_c^\infty(\mathbb R^d)$ such that
\begin{equation}\label{e.onHf}
\EEE_p[f] < \kappa \int_{\Rd} \frac{|f(x)|^p}{|x|^{\alpha}} dx<\infty.
\end{equation}
It is well known that $C_c^\infty(\mathbb R^d)$ is a subset of the domain of the generator of the semigroup $\{P_t,t\ge 0\}$ acting on $L^p$, see, e.g., Jacob \cite[Theorem 3.3.11]{MR1917230} or Farkas, Jacob and Schilling \cite[Theorem 1.4.2 or Proposition 2.1.1]{MR1840499}; or use \cite[Theorem~31.5]{MR1739520}, \eqref{e.mp} and \eqref{e.zp}. Therefore $f$ is in the domain of $\Delta^{\alpha/2}$ on $L^p(\Rd)$.
For $M\in (0,\infty)$ we write $u^{(M)}(t,x)=\tilde P_t^{(M)} f(x)$ or $u^{(M)}(t)=\tilde P_t^{(M)} f$.
By \eqref{eq:diff},
\begin{eqnarray*}
\frac{d}{dt}\int_{\mathbb R^d} |u^{(M)}(t)|^p dx &=&p\int_{\mathbb R^d} u^{(M)}(t)^{\langle p-1\rangle}\left(\Delta^{\alpha/2}u^{(M)}(t) +q^{(M)} u^{(M)}(t)\right)\,dx\\
&=& p\left(-\mathcal E_p[u^{(M)}(t)] +\int_{\mathbb R^d}q^{(M)}|u^{(M)}(t)|^p dx\right).
\end{eqnarray*}
In particular,
\begin{eqnarray}\label{eq:contr1}
&&\left.\frac{d}{dt}\int_{\mathbb R^d} |u^{(M)}(t)|^p dx \right|_{t=0}
=p\left(-\mathcal E_p[f] +\int_{\mathbb R^d} q^{(M)}|f|^pdx\right)\nonumber
\\
&=& p\left(\int_{\mathbb R^d}\left(q^{(M)}(x)-\frac{\kappa}{|x|^\alpha}\right)
|f|^pdx\right)
+p\left(\kappa\int_{\mathbb R^d} \frac{|f|^p}{|x|^\alpha}dx - \mathcal E_p[f]\right).
\end{eqnarray}
The last term is strictly positive by \eqref{e.onHf}. Since $q^{(M)}(x) \to\kappa|x|^{-\alpha}$, for sufficiently large $M$ the expression in \eqref{eq:contr1} is strictly positive, and so for such $M$ we get,
\[\frac{d}{dt}\|u_\kappa^{(M_0)}(t)\|_p^p\bigg|_{t=0} >0.\]
Therefore for small $t>0$ we have
\[ \|f\|_p^p= \|u_\kappa^{(M)}(0)\|_p^p < \|u_\kappa^{(M)}(t)\|_p^p\le \|\tilde P_t f\|_p^p.\]
The case $p\in (1,2)$ follows by the duality argument.
\end{proof}

To complement Theorem~\ref{t.znHp} we note that for $x,y\in \Rd$ and $t>0$,
\begin{equation}
\label{eq:mainThmEst}
\tilde{p}_t(x,y)\approx \left(1+t^{\delta/\alpha}|x|^{-\delta}  \right)\left(1+t^{\delta/\alpha}|y|^{-\delta}  \right) \left( t^{-d/\alpha}\wedge \frac{t}{|x-y|^{d+\alpha}} \right).
\end{equation}
The result is given in \cite[Theorem~1]{MR3933622} (for $x=0$ or $y=0$ we have $p^{(1)}_t(x,y) = \infty$; see \cite[Lemma 3.7 and the comment before Corollary 3.9]{MR4140086}, so $\tilde{p}_t(x,y) = \infty$).
Denote $H(x)=|x|^{-\delta}\vee 1$, $x\in \Rd$.
Clearly,
\begin{equation}\label{e.oHH}
\tilde p_1(x,y)\approx H(x)H(y)p_1(x,y), \quad x,y\in \Rd.
\end{equation}
\begin{proof}[Proof of Theorem~\ref{t.bd}]
By \eqref{e.es} it is enough to consider $t=1$.
Let $B=B(0,1)\subset \Rd$. Denote $q=p/(p-1)$. If $\delta\ge d/q$, then $\int_B H^q=\infty$, so there is $f\in L^p_+$ such that $\int_B H(y)f(y)=\infty$, see, e.g., \cite[Corollary~4.4.5]{MR2267655}. By \eqref{e.oHH}, $\tilde P_1 f=\infty$ on $\Rd$.
Also, if $\delta\ge d/p$, then $\tilde P_t \indyk_B\ge c H\indyk_B$, so $f:=\indyk_B\in L^p$ but $\int |\tilde P_t f|^p\ge c \int_B H^p=\infty$.
If $0\le \delta<d/p^*$, then we let $f\in L^p_+$, $g\in L^q_+$ and consider
\begin{align*}
I(f,g)&:=\int_\Rd\int_\Rd g(x)H(x)H(y)p_1(x,y)f(y)dydx\\
&=I(f\indyk_B,g\indyk_{B})+I(f\indyk_B,g\indyk_{B^c})+I(f\indyk_{B^c},g\indyk_{B})+I(f\indyk_{B^c},g\indyk_{B^c}).
\end{align*}
By \eqref{e.cLp} and H\"{o}lder inequality,
$$
I(f\indyk_{B^c},g\indyk_{B^c})\le \|f\|_p\|g\|_q.
$$
By H\"{o}lder inequality, $\int_B gH\le c\|g\|_q$, where $c=\big(\int_B |x|^{-\delta p}\big)^{1/p}<\infty$. Similarly,
$\int_B fH\le c\|f\|_p$, where $c=\big(\int_B |x|^{-\delta q}\big)^{1/q}<\infty$.
Furthermore, since $p_1$ is bounded, $I(f\indyk_B,g\indyk_{B})\le c \int_B gH \int_B f H\le c\|g\|_q\|f\|_p$.
By \eqref{e.cLi},
$I(f\indyk_{B^c},g\indyk_{B})\le \int_B gH \|p_1f\|_\infty\le c\|g\|_q\|f\|_p$. Similarly, $I(f\indyk_B,g\indyk_{B^c})\le c\|g\|_q\|f\|_p$.
By \eqref{e.oHH}, $\tilde P_1$ is bounded on $L^p$.
\end{proof}

\section{Appendix}\label{s.A}
For the convenience of the reader, we add some details to the arguments used in the proof of Theorem \ref{t.znHp}.
In what follows, we consider a measure space $(X,\mu)$ and the spaces $L^p(X,d\mu)$ for $p\geq 1$.
For simplicity, we write $L^p$ for $L^p(X,d\mu).$
Recall that $p/(p-1)$ is the H\"{o}lder conjugate exponent of $p\in (1,\infty)$ (i.e. we have $p^{-1} + (p/(p-1))^{-1} = 1$).
\begin{lem}[Continuity]\label{thm:continuity}
Let $p \in (1, \infty)$ and $r \in \big[\frac{p}{p-1}, \infty \big)$.
If $f\in L^p$, $g\in L^r$, then $\| fg\|_{\frac{pr}{p+r}} \leq \|f\|_p \|g\|_r$. If $f_n \to f$ in $L^p$ and $g_n \to g$ in $L^r$, then $f_ng_n \to fg$ in $L^{\frac{pr}{p+r}}$.
\end{lem}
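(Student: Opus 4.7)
The plan is to recognize this as a direct consequence of H\"older's inequality, so the work is mostly bookkeeping of exponents. Set $s = \frac{pr}{p+r}$, so that $\frac{1}{s} = \frac{1}{p} + \frac{1}{r}$. The constraint $r \ge \frac{p}{p-1}$ gives $\frac{1}{r} \le \frac{p-1}{p}$, hence $\frac{1}{s} \le 1$, i.e.\ $s \ge 1$, so $L^s$ is a genuine Banach space and its norm obeys the triangle inequality. Moreover $\frac{1}{s} \ge \frac{1}{p}$ and $\frac{1}{s} \ge \frac{1}{r}$, so $s \le p \wedge r$, and the exponents $\frac{p}{s}$ and $\frac{r}{s}$ are H\"older conjugates.

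For the inequality, I would simply apply H\"older with these conjugate exponents to $|f|^s \cdot |g|^s$:
\begin{equation*}
\int_X |fg|^s \, d\mu \;=\; \int_X |f|^s |g|^s \, d\mu \;\le\; \Bigl(\int_X |f|^p \, d\mu\Bigr)^{s/p} \Bigl(\int_X |g|^r \, d\mu\Bigr)^{s/r},
\end{equation*}
and raise both sides to the power $1/s$.

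For the convergence statement, I would insert the usual telescoping term $fg_n$:
\begin{equation*}
\|f_n g_n - fg\|_s \;\le\; \|(f_n - f) g_n\|_s + \|f(g_n - g)\|_s \;\le\; \|f_n - f\|_p \, \|g_n\|_r + \|f\|_p \, \|g_n - g\|_r,
\end{equation*}
using the inequality just proved on each piece. Since $g_n \to g$ in $L^r$, the sequence $\|g_n\|_r$ is bounded, and both $\|f_n - f\|_p$ and $\|g_n - g\|_r$ tend to zero by hypothesis, so the right-hand side vanishes in the limit.

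There is no real obstacle here; the only thing to be careful about is verifying $s \ge 1$, which is exactly where the hypothesis $r \ge p/(p-1)$ is used, and conversely where this hypothesis is needed so that the statement of the lemma (which invokes $\|\cdot\|_s$ as a norm in the triangle-inequality step) is meaningful.
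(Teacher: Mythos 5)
Your proof is correct and follows essentially the same route as the paper: verify $s = \frac{pr}{p+r} \ge 1$ from $r \ge \frac{p}{p-1}$, apply H\"older with the conjugate pair $\frac{p}{s},\frac{r}{s}$, and then telescope for the convergence. The only (immaterial) difference is that you insert $fg_n$ and use boundedness of $\|g_n\|_r$, whereas the paper inserts $f_ng$ and uses boundedness of $\|f_n\|_p$.
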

\begin{proof}
Of course, $r>1$. Also, $\frac{p+r}{pr} = \frac{1}{r} + \frac{1}{p} \leq \frac{p-1}{p} + \frac{1}{p} = 1$, thus $\frac{pr}{p+r} \in [1, \infty)$. For $f \in L^p$, $g \in L^r$, by H\"older inequality with exponents $\frac{p+r}{r}$ and $ \frac{p+r}{p}$,
$$
\int |fg|^{\frac{pr}{p+r}} \leq \left(\int |f|^p \right)^{\frac{r}{p+r}} \left(\int |g|^r \right)^{\frac{p}{p+r}} < \infty,
$$
and we get the first statement.
The second statement is verified as follows,
\begin{align*}
\|f_ng_n - fg\|_{\frac{pr}{p+r}} &= \|f_ng_n - f_ng +f_ng -fg\|_{\frac{pr}{p+r}} \\
&\leq \|f_n\|_p \|g_n-g\|_r + \|f_n - f \|_p \|g\|_r \to 0,
\end{align*}
as the sequence $f_n$ is bounded in $L^p.$
\end{proof}
Let $p \in (1,\infty)$. Assume that
$[0,\infty) \ni t \mapsto u(t) \in L^p$.
We will relate the continuity and differentiability properties of $u$ in $L^p$ to those of $|u|^p$ in $L^1$.
We denote
$$\Delta_hu(t) = u(t+h)-u(t), \quad \textnormal{if} \,\, t, t+h\ge 0.$$

We say that $u$ is continuous in $L^p$ at $t\ge 0,$ if $\Delta_hu(t) \to 0$  in $L^p$ as $h\to 0$,
and we say $u$ is continuously differentiable at $t\ge 0$ if  $u'(t) := \lim_{h \to 0} \frac{1}{h} \Delta_hu(t)$ in $L^p$ with
continuous $u'$. Of course, $u'(0)$ is the right-hand side derivative in the above setting.

\begin{lem}\label{thm:difpow0}
If $u$ is continuous in $L^p$, then $|u|^p$  and $u^{\langle p - 1 \rangle}$ are continuous in $L^1$ and $L^{\frac{p}{p-1}}$, respectively.
\end{lem}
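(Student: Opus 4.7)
The plan is to combine the pointwise estimates from Lemma~\ref{thm:incrineq} with H\"older's inequality to transfer $L^p$--continuity of $u$ into $L^1$--continuity of $|u|^p$ and $L^{p/(p-1)}$--continuity of $u^{\langle p-1\rangle}$. Throughout, I would use that $\|u(t+h)\|_p$ is bounded for small $h$, because $\|u(t+h)\|_p\le \|u(t)\|_p+\|\Delta_h u(t)\|_p$ and $\|\Delta_h u(t)\|_p\to 0$.

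First, for $|u|^p$ in $L^1$, apply \eqref{eq:2c} pointwise with $a=u(t)$, $b=u(t+h)$ to get
$$\bigl||u(t+h)|^p-|u(t)|^p\bigr|\le (p+C)\,|\Delta_h u(t)|\,\bigl(|u(t+h)|+|u(t)|\bigr)^{p-1}.$$
Integrating and applying H\"older with exponents $p$ and $p/(p-1)$ gives
$$\||u(t+h)|^p-|u(t)|^p\|_1\le (p+C)\,\|\Delta_h u(t)\|_p\,\bigl\||u(t+h)|+|u(t)|\bigr\|_p^{p-1},$$
which tends to $0$ since the first factor vanishes and the second stays bounded.

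Next, for $u^{\langle p-1\rangle}$ in $L^q$ with $q=p/(p-1)$, I would split into two cases according to whether $p-1\le 1$. If $1<p\le 2$, apply \eqref{eq:2dd} with $\lambda=p-1\in(0,1]$; the weight $(|b|+|a|)^{p-1-\lambda}$ collapses to $1$ and yields the H\"older-type bound
$$\bigl|u(t+h)^{\langle p-1\rangle}-u(t)^{\langle p-1\rangle}\bigr|\le C'|\Delta_h u(t)|^{p-1}.$$
Raising to the $q$-th power and integrating gives
$\|u(t+h)^{\langle p-1\rangle}-u(t)^{\langle p-1\rangle}\|_q^q \le (C')^q\|\Delta_h u(t)\|_p^p \to 0$, since $(p-1)q=p$. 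If $p>2$, apply \eqref{eq:2dd} with $\lambda=1$ to get
$$\bigl|u(t+h)^{\langle p-1\rangle}-u(t)^{\langle p-1\rangle}\bigr|\le C'|\Delta_h u(t)|\,\bigl(|u(t+h)|+|u(t)|\bigr)^{p-2},$$
then raise to the $q$-th power, integrate, and apply H\"older with conjugate exponents $p/q=p-1$ and $(p-1)/(p-2)$. Since $(p-2)q\cdot(p-1)/(p-2)=p$, one obtains
$$\|u(t+h)^{\langle p-1\rangle}-u(t)^{\langle p-1\rangle}\|_q^q\le (C')^q\,\|\Delta_h u(t)\|_p^q\,\bigl\||u(t+h)|+|u(t)|\bigr\|_p^{p(p-2)/(p-1)},$$
which again tends to $0$.

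No step is deep; the only subtlety is the case analysis for $u^{\langle p-1\rangle}$, because the map $a\mapsto a^{\langle p-1\rangle}$ is H\"older (not Lipschitz) of exponent $p-1$ when $p\in(1,2]$ but Lipschitz with an unbounded weight when $p>2$. Choosing the correct $\lambda$ in \eqref{eq:2dd} for each regime is really all that is required; the H\"older exponents above are forced by the identity $(p-1)q=p$, which makes everything match.
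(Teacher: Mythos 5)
Your proof is correct and takes essentially the same route as the paper: the pointwise bounds \eqref{eq:2c} and \eqref{eq:2dd} combined with H\"older's inequality, which the paper packages as Lemma~\ref{thm:continuity}. The only cosmetic difference is your case split at $p=2$; the paper avoids it by taking any $\lambda\in(0,1]$ with $\lambda<p-1$, so the weight exponent $p-1-\lambda$ stays positive for every $p\in(1,\infty)$.
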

\begin{proof}
By \eqref{eq:2c},
$|\Delta_h |u(t)|^p|\le (p+C)|\Delta_h u(t)|(|u(t+h)|+|u(t)|)^{p-1}$.
Similarly, by \eqref{eq:2dd}, $|\Delta_h u(t)^{\langle p - 1 \rangle}|\le c'|\Delta_h u(t)|^\lambda (|u(t+h)|+|u(t)|)^{p-1-\lambda}$, and we can pick $\lambda>0$ such that $p-1-\lambda>0$.
The results follow from Lemma \ref{thm:continuity}.
\end{proof}

\begin{lem}[Differentiability]\label{thm:difpow}
If $[0,\infty)\ni t \mapsto u(t)$ is continuously differentiable in $L^p$,
then $|u|^p$ is continuously differentiable in $L^1$ and $(|u|^p)' = pu^{\langle p - 1 \rangle} u'$.
\end{lem}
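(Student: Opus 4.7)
The plan is to compare the difference quotient of $|u(t)|^p$ with the candidate derivative $p u(t)^{\langle p-1\rangle}u'(t)$ using the Bregman decomposition that underlies the form $\mathcal E_p$. Directly from the definition of $F_p$, $|b|^p = |a|^p + p a^{\langle p-1\rangle}(b-a) + F_p(a,b)$; applying this pointwise with $a = u(t,\cdot)$ and $b = u(t+h,\cdot)$ and rearranging gives
\begin{equation*}
\frac{\Delta_h |u(t)|^p}{h} - p u(t)^{\langle p-1\rangle} u'(t) \;=\; p u(t)^{\langle p-1\rangle}\left(\frac{\Delta_h u(t)}{h} - u'(t)\right) + \frac{F_p(u(t), u(t+h))}{h}.
\end{equation*}
I will show both terms on the right tend to $0$ in $L^1$ as $h \to 0$, and then verify that $p u^{\langle p-1\rangle} u'$ depends continuously on $t$ in $L^1$.

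For the first (linear) term, note that $u(t) \in L^p$ gives $u(t)^{\langle p-1\rangle} \in L^{p/(p-1)}$, while the bracketed factor tends to $0$ in $L^p$ by the assumed $L^p$-differentiability. Lemma~\ref{thm:continuity} applied with $r = p/(p-1)$ (so that $pr/(p+r)=1$) then yields convergence to $0$ in $L^1$.

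The main obstacle is the Bregman remainder, which I control using \eqref{eq:2d}. Choose any $\epsilon \in (0, \min(p-1,1))$ and set $\lambda = 1+\epsilon \in (1,2]$; this is possible for every $p \in (1,\infty)$. Then \eqref{eq:2d} gives
\begin{equation*}
\frac{F_p(u(t), u(t+h))}{h} \;\le\; C \left|\frac{\Delta_h u(t)}{h}\right| \cdot |\Delta_h u(t)|^{\epsilon} \cdot \bigl(|u(t+h)|+|u(t)|\bigr)^{p-1-\epsilon}.
\end{equation*}
Apply H\"older's inequality with exponents $p$, $p/\epsilon$ and $p/(p-1-\epsilon)$ (which sum to $1$ and are all positive by the choice of $\epsilon$):
\begin{equation*}
\left\|\frac{F_p(u(t), u(t+h))}{h}\right\|_1 \le C \left\|\frac{\Delta_h u(t)}{h}\right\|_p \cdot \|\Delta_h u(t)\|_p^{\epsilon} \cdot \bigl\||u(t+h)|+|u(t)|\bigr\|_p^{p-1-\epsilon}.
\end{equation*}
The first factor is bounded (it converges to $\|u'(t)\|_p$), the third is bounded (it converges to $2\|u(t)\|_p$), and the middle factor tends to $0$ because $u$ is $L^p$-continuous. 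Hence the right-hand side vanishes as $h \to 0$.

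It remains to verify continuity of $t \mapsto p u(t)^{\langle p-1\rangle} u'(t)$ in $L^1$. By Lemma~\ref{thm:difpow0}, $u^{\langle p-1\rangle}$ is continuous into $L^{p/(p-1)}$, and $u'$ is continuous into $L^p$ by hypothesis, so Lemma~\ref{thm:continuity} with $r=p/(p-1)$ gives continuity of the product in $L^1$. Thus $|u|^p$ is continuously differentiable in $L^1$ with the stated derivative. The delicate point throughout is paragraph three: one must split the factors of the Bregman bound across three H\"older exponents in a way that isolates a genuinely vanishing factor $\|\Delta_h u(t)\|_p^\epsilon$, which forces the choice $\lambda = 1+\epsilon$ rather than the more natural $\lambda = 2$ that fails for $p \in (1,2)$.
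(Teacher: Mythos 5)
Your proof is correct and follows essentially the same route as the paper: the same Bregman/Taylor decomposition controlled by \eqref{eq:2d} with an exponent $\lambda\in(1,\min(2,p))$, combined with Lemma~\ref{thm:continuity} and Lemma~\ref{thm:difpow0} for the convergence of the linear term and the continuity of $pu^{\langle p-1\rangle}u'$ in $L^1$. You merely spell out the three-exponent H\"older step that the paper leaves implicit after choosing $\lambda>1$ with $p-\lambda>0$.
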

For comparison, Marinelli, R\"ockner \cite[p. 4]{MR3463679} assert that for $p \geq 2$ the function
$\phi: H \ni x \mapsto \|x\|^p$ is weekly differentiable for every Hilbert space $H$, with the Fr\'echet derivative
$D\phi(x): y \mapsto p \|x\|^{p-2} (x,y)$. 
\begin{proof}[Proof of Lemma \ref{thm:difpow}]
By Lemma~\ref{thm:continuity},
$u^{\langle p - 1 \rangle} u'$ is continuous in $L^1$.
By \eqref{eq:2d}, for
$\lambda\in [0,2]$,
\begin{align*}
\left|\frac{1}{h} \left(\Delta_h |u|^p (t) - p u(t)^{\langle p - 1 \rangle} \Delta_h u(t)\right)\right|
&\leq Ch^{\lambda-1} |\frac{1}{h} \Delta_hu(t)|^{\lambda} (|u(t+h)| +|u(t)|)^{p-\lambda}.
\end{align*}
We pick
$\lambda>1$ such that $p-\lambda>0$ and the result follows.
\end{proof}

Recall that $(P_t, t \geq 0)$ is a strongly continuous operator semigroup on $L^p$. Let $f$ be in the domain of its generator $A$. Let $u(t) = P_tf$. Then $u'(t) = P_tAf = AP_tf = Au(t)$.
By Lemma \ref{thm:difpow} we obtain the following result.
\begin{lem}\label{lem:pdif}
$|u(t)|^p$ is differentiable in $L^1$ with the derivative
\begin{equation}\label{e.ppp}
(|u(t)|^p)' =pu(t)^{\langle p - 1 \rangle} u'(t)= pu(t)^{\langle p - 1 \rangle} P_t Af, \quad t \geq 0.
\end{equation}
\end{lem}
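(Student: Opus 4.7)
The plan is to apply Lemma~\ref{thm:difpow} directly to $u(t) = P_t f$, reducing everything to the standard strong-continuity/differentiability properties of semigroups.

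First, I would recall that since $f$ lies in the domain of the generator $A$ of the strongly continuous semigroup $(P_t, t\ge 0)$ on $L^p$, the map $[0,\infty) \ni t \mapsto u(t) = P_t f \in L^p$ is differentiable with
\[
u'(t) = A P_t f = P_t A f, \quad t \ge 0,
\]
see, e.g., the remark just preceding Lemma~\ref{lem:pdif}. Because $Af \in L^p$ and $(P_t)_{t \ge 0}$ is strongly continuous on $L^p$, the map $t \mapsto P_t A f$ is continuous from $[0,\infty)$ into $L^p$. Hence $u$ is continuously differentiable in $L^p$ in the sense introduced before Lemma~\ref{thm:difpow0}.

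Next, I would invoke Lemma~\ref{thm:difpow}: continuous differentiability of $u$ in $L^p$ implies that $|u|^p$ is continuously differentiable in $L^1$ with the pointwise-in-$t$ derivative given by the chain rule
\[
(|u(t)|^p)' = p\, u(t)^{\langle p-1 \rangle}\, u'(t), \quad t \ge 0.
\]
Substituting $u'(t) = P_t A f$ yields the claimed formula \eqref{e.ppp}. There is no real obstacle here; the only thing to check is that the hypotheses of Lemma~\ref{thm:difpow} are met, and these follow immediately from the semigroup theory together with $f \in \mathcal{D}(A)$.
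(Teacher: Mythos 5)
Your argument is correct and is exactly the paper's route: the text preceding the lemma records that $u(t)=P_tf$ satisfies $u'(t)=P_tAf=AP_tf$ for $f$ in the domain of $A$, and the conclusion then follows by applying Lemma~\ref{thm:difpow}. Your added remark that $t\mapsto P_tAf$ is continuous in $L^p$ (so that $u$ is continuously differentiable, as Lemma~\ref{thm:difpow} requires) just makes explicit what the paper leaves implicit.
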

Finally, recall that in the proof of Theorem \ref{t.znHp} we only proved the contractivity of the semigroup for $\kappa\le \kappa_{(d-\alpha)/p}$ and \textit{nonnegative} $f \in L^p$. This suffices because for general $f \in L^p$ we may write $f=f_{+} - f_{-}$ and
we have
$
(\tilde{P}_t f)_+ \leq \tilde{P}_t f_+,
$
and
$
(\tilde{P}_t f)_- \leq \tilde{P}_t f_-,
$
hence
$
|\tilde{P}_t f|^p \leq (\tilde{P}_tf_{+})^{p} + (\tilde{P}_tf_{-})^{p}.
$
Therefore,
\begin{align}
\int |\tilde{P}_tf(x)|^p dx &\leq \int \tilde{P}_tf_+(x)^p dx + \int \tilde{P}_tf_-(x)^p dx \label{eq:fgen}\\
&\leq \int f_+(x)^p dx + \int f_-(x)^p dx = \int |f(x)|^p dx. \nonumber
\end{align}


\end{document}